\newlist{steps}{enumerate}{1}
\setlist[steps, 1]{label = Step \arabic*:}
\newtheorem{theorem}{\bf Theorem}[section]
\newtheorem{lemma}[theorem]{\bf Lemma}
\newtheorem{definition}[theorem]{\bf Definition}
\newtheorem{corollary}[theorem]{\bf Corollary}
\newtheorem{proposition}[theorem]{\bf Proposition}
\newtheorem{remark}[theorem]{\bf Remark}
\newtheorem{question}[theorem]{\bf Question}
\newcommand{\rme}{\mathrm{e}}
\newcommand{\rmi}{\mathrm{i}}
\begin{document}

\title{Ultraknots and limit knots}

\author{
Benjamin Bode}
\date{}

\address{Instituto de Ciencias Matemáticas (ICMAT), Consejo Superior de Investigaciones Científicas (CSIC), C/ Nicolás Cabrera, 13-15, Campus Cantoblanco, UAM, 28049 Madrid, Spain}
\email{benjamin.bode@icmat.es}








%

\maketitle
\begin{abstract}
We prove that every knot type in $\mathbb{R}^3$ can be parametrised by a smooth function $f:S^1\to\mathbb{R}^3$, $f(t)=(x(t),y(t),z(t))$ such that all derivatives $f^{(n)}(t)=(x^{(n)}(t),y^{(n)}(t),z^{(n)}(t))$, $n\in\mathbb{N}$, parametrise knots and every knot type appears in the corresponding sequence of knots. We also study knot types that arise as limits of such sequences.
\end{abstract}

 \vspace{0.2cm} 

   \noindent \textit{Keywords: ultraknot, limit knot, Fourier knot, Lissajous knot} 
   
     \vspace{0.2cm} 
     
      \noindent \textit{Mathematics Subject Classification 2020:} 
    57K10.

\section{Introduction}

A knot $K$ in $\mathbb{R}^3$ is the image of a smooth map $f:S^1\cong\mathbb{R}/2\pi\to\mathbb{R}^3$, $f(t)=(x(t),y(t),z(t))$ with $(x(t),y(t),z(t))\neq (x(s),y(s),z(s))$ for all $s\neq t$. Taking derivatives with respect to $t$, we obtain another space curve $K_1$, parametrised by $(x'(t),y'(t),z'(t))$. This loop $K_1$ is not necessarily a knot, since there could be self-intersections or parts where the loop traces back on itself. Still, we may continue to take derivatives with respect to $t$ and for every $n\in\mathbb{N}$ the $n$th derivative $f^{(n)}(t)=(x^{(n)}(t),y^{(n)}(t),z^{(n)}(t))$ parametrises a loop $K_n$ in $\mathbb{R}^3$, so that if these loops are simple, we obtain a sequence of knots. This paper studies the question of which sequences of knots can arise in this way.

Throughout the paper all knots are assumed to be in $\mathbb{R}^3$ and smooth. If two knots $K$ and $L$ are ambient isotopic, this is written as $K\cong L$.

\begin{definition}
Let $f:S^1\to\mathbb{R}^3$, $f(t)=(x(t),y(t),z(t))$ be a smooth function, parametrising a knot $K$. Let $K_n$ be the loop in $\mathbb{R}^3$ that is parametrised by $f^{(n)}(t)=(x^{(n)}(t),y^{(n)}(t),z^{(n)}(t))$. We call $K$ an \textbf{ultraknot} if $K_n$ is a knot for all $n\in\mathbb{N}$ and for every knot $L$ there is an $n\in\mathbb{N}$ so that $K_n$ is ambient isotopic to $L$. 
\end{definition}

The definition, the terminology and the following question are all due to Peter Feller:
\begin{question}
Which knots are ultraknots?
\end{question}

We answer this question and show that not only is every knot an ultraknot, but we can also prescribe the order in which different knot types appear in the sequence.

\begin{theorem}\label{thm:ultra}
Let $K$ be a knot and $(L_i)_{i\in\mathbb{N}}$ be any sequence of knots. Then there is a smooth map $f:S^1\to\mathbb{R}^3$ that parametrises $K$, the derivatives $f^{(n)}$ parametrise knots $K_n$ for all $n$ and the sequence $L_i$ is a subsequence of $(K_n)_{n\in\mathbb{N}}$, that is, there is a strictly monotone increasing sequence $(n_i)_{i\in\mathbb{N}}$ of natural numbers such that $K_{n_i}$ is ambient isotopic to $L_{i}$ for all $i\in\mathbb{N}$.
\end{theorem}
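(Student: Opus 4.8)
The plan is to build $f$ as a rapidly convergent trigonometric series $f(t)=\sum_{i\ge 0}c_i\,g_i(t)$, in which $g_0$ is responsible for the knot $K$ at order $n=0$ and, for $i\ge 1$, the summand $g_i$ is a trigonometric polynomial whose $m_i$-th derivative is a trigonometric parametrisation of $L_i$ concentrated in a very high, very narrow frequency window, the orders $m_0=0<m_1<m_2<\dots$ being the places where the prescribed knots are to appear. The constants $c_i$, the orders $m_i$ and the frequency windows will be fixed recursively so that: (i) at every order $n$ exactly one summand dominates $f^{(n)}$ in the $C^1$ sense (i.e. both $f^{(n)}$ and $f^{(n+1)}$ are controlled by it), whence $f^{(n)}$ is $C^1$-close to an embedded curve and is therefore itself a knot — here I use the standard fact that a loop $C^1$-close to an embedded loop is ambient isotopic to it; and (ii) at order $n=m_i$ the dominant summand is $c_ig_i$, so that $K_{m_i}\cong L_i$. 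Then $K_0\cong K$ and $(m_i)_{i\ge 1}$ exhibits $(L_i)$ as a subsequence of $(K_n)_{n\in\mathbb{N}}$.

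The basic building block is the following sharpening of the classical fact that every knot is a Fourier knot: for every knot $L$ and every $N\in\mathbb{N}$ there is a trigonometric-polynomial parametrisation of $L$ all of whose non-zero Fourier harmonics have absolute value $>N$ — equivalently a parametrisation of the form $t\mapsto 2\operatorname{Re}(\rme^{\rmi N t}s(t))$ with $s$ a $\mathbb{C}^3$-valued polynomial in $\rme^{\rmi t}$. Granting this, let $q_i$ be such a parametrisation of $L_i$, with harmonics in a band $[N_i,N_i']$ whose width $N_i'-N_i$ is bounded in terms of $L_i$ only, and define $g_i$ from $q_i$ by dividing the coefficient of $\rme^{\rmi l t}$ by $(\rmi l)^{m_i}$, so that $g_i^{(m_i)}=q_i$. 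Then for $n>m_i$, $g_i^{(n)}=q_i^{(n-m_i)}$ has size amplified by at most $(N_i')^{\,n-m_i}$; for $n<m_i$, $g_i^{(n)}$ is $q_i$ integrated $m_i-n$ times and has size damped by at least $N_i^{-(m_i-n)}$; and because the band is narrow, $g_i^{(n)}$ remains, up to scaling and a reparametrising phase shift, $C^1$-close to $q_i$ throughout a whole range of orders around $m_i$, hence isotopic to $L_i$ on that range.

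Now the data are chosen inductively. Put $m_0=0$ and let $g_0$ parametrise $K$. Given everything through stage $i-1$, choose $N_i$ astronomically large compared with all previously fixed quantities — in particular $N_i\gg N_{i-1}'$ and $N_i$ exceeds any finite product of earlier band-tops — apply the building block to get $q_i$, $g_i$ and $N_i'=N_i+O_{L_i}(1)$, pick $m_i>m_{i-1}$, and pick $c_i$ so large that it dominates $\sum_{j<i}c_j(N_j')^{m_i-m_j}$ by a huge prescribed factor. Since the damping factors $N_j^{-(m_j-m_i)}$ for $j>i$ are minuscule and the $N_j$ grow so fast, the tail $\sum_{j>i}c_jg_j^{(m_i)}$ is also negligible next to $c_ig_i^{(m_i)}=c_iq_i$, so $c_ig_i$ dominates $f^{(m_i)}$ in $C^1$ and $K_{m_i}\cong L_i$. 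More generally, for each integer $n$ the comparison of the terms reduces to comparing the monomials $c_i(N_i')^{\,n}$ and $c_{i+1}N_{i+1}^{\,n}$ at the junctions between consecutive summands; since each ratio $N_{i+1}/N_i'$ is enormous, the set of $n$ at which no summand clearly dominates has length less than $1$, and a final greedy adjustment of the $c_i$ (each still chosen from a half-line of admissible values) places every junction strictly between two integers and keeps each $m_i$ in the interior of the $i$-th dominance range. Hence every integer order $n$ has a genuine dominant summand, $f^{(n)}$ is $C^1$-close to an embedded curve, and $K_n$ is a knot; the same estimates — for fixed $n$ all but finitely many terms are damped by $N_i^{-(m_i-n)}$ — give uniform convergence of $f$ and of every termwise derivative, so $f$ is smooth.

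The step carrying the real content is the building block: producing, for an arbitrary knot type and an arbitrarily prescribed $N$, a trigonometric-polynomial parametrisation with all harmonics of absolute value $>N$. (A generic parametrisation has low harmonics encoding a non-trivial winding; for space curves this is not an obstruction, but it must be circumvented.) A natural route is to start from a closed-braid presentation of $L$ (Alexander's theorem), stabilise the braid so that it has at least $N$ strands, realise the closed braid as a curve winding $N$ times around a thin solid-torus neighbourhood — so that its dominant harmonic is $\approx N$ and the band is narrow — and then use the remaining small freedom, together with the fact that a sufficiently $C^1$-small transverse perturbation preserves the knot type, to remove the surviving low harmonics. Everything else is the bookkeeping of the previous paragraph: choosing $N_i$, $N_i'$, $m_i$, $c_i$ fast-growing enough and nudging the $c_i$ so that no junction lands on an integer.
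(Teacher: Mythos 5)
Your overall architecture is the same as the paper's Section 2: write $f=\sum_i c_ig_i$ where $g_i$ is obtained from a trigonometric parametrisation $q_i$ of $L_i$ supported on high frequencies by dividing the $k$-th Fourier coefficient by $(\rmi k)^{m_i}$, choose the frequencies and constants to grow fast enough that one summand dominates at each prescribed order, and invoke $C^1$-openness of knot types. The genuine gap is your ``building block'': a trigonometric parametrisation of an arbitrary knot $L$ with \emph{all} harmonics in a band $[N,N+C_L]$ of width bounded in terms of $L$ alone. You flag this as the step carrying the real content, but the route you sketch does not produce it. In the closed-braid parametrisation $(\cos(Nt)(R+F(t)),\sin(Nt)(R+F(t)),G(t))$ the $z$-coordinate $G$ retains genuinely low harmonics, and if you stabilise the braid to $N$ strands the interpolating polynomials $F$ and $G$ have degrees growing like $N^2$ and $N^3$, so the band has width far exceeding $N$ rather than $O_L(1)$. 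The paper's Lemma~\ref{lem:param} only achieves a lower bound on the harmonics (lowest order $\geq n$), and its Section 2 explicitly tracks both the lowest order $N_i$ and the much larger degree $M_i$; nothing there gives a narrow band. Worse, there is reason to doubt the claim itself: a curve with all harmonics in $[N,N+C]$ is $\gamma(t)=2\operatorname{Re}(\rme^{\rmi Nt}s(t))$ with $\deg s\leq C$, and then $\gamma(t+2\pi/N)-\gamma(t)=2\operatorname{Re}(\rme^{\rmi Nt}(s(t+2\pi/N)-s(t)))$ has size $O(C/N)\|s\|$, so $\gamma$ is a small \emph{relative} perturbation of an $N$-fold cover of a short loop; it stays near the torus swept by the origin-centred ellipses $\theta\mapsto 2\operatorname{Re}(\rme^{\rmi\theta}s(\tau))$, which severely constrains both its embeddedness margin and its possible knot types. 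At minimum this is a substantial unproved assertion on which everything else rests.

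The narrow band is not a cosmetic convenience: it is the only thing in your argument that makes the intermediate orders work. Your step (i) says that if one summand dominates $f^{(n)}$ in $C^1$ then $f^{(n)}$ is close to an embedded curve; but dominance only tells you $f^{(n)}\approx c_ig_i^{(n)}=c_iq_i^{(n-m_i)}$, and a derivative of an embedded trigonometric curve need not be embedded — that is the central difficulty of the whole problem. Only when the band is narrow does differentiation act approximately as multiplication by $\rmi N_i$ (a scaling plus a quarter-period reparametrisation), keeping $q_i^{(n-m_i)}$ isotopic to $L_i$ over a range of orders. With the wide bands that the braid construction actually yields, the dominant summand at an order $n$ strictly between $m_i$ and $m_{i+1}$ can be a non-simple loop, and your argument gives no way to repair it. This is precisely what the paper's Section~\ref{sec:3} is for: an inductive sequence of corrective perturbations $\widetilde{p}_n$ (using Lemmas~\ref{Nself}--\ref{lem:zcoord} to make a planar projection 1-covered with finitely many transverse self-intersections and then to separate the strands in the third coordinate), with coefficient bounds chosen so as not to disturb $K$ or any $L_i$. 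Your proposal has no substitute for this step, so as written the proof of Theorem~\ref{thm:ultra} is incomplete beyond the subsequence statement of Proposition~\ref{prop}.
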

Since for every natural number $n$ there are finitely many knots whose minimal crossing number is $n$, the set of knot types is countable. We can thus pick a sequence of knots $L_i$ that contains every knot type.
\begin{corollary}
Every knot is ambient isotopic to an ultraknot.
\end{corollary}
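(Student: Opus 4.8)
The plan is to deduce the corollary directly from Theorem~\ref{thm:ultra}, so essentially all of the substance is in that theorem; here I will first give the short deduction and then indicate how I would attack Theorem~\ref{thm:ultra} itself. As already observed, the set of knot types is countable, so fix once and for all an enumeration $(L_i)_{i\in\mathbb{N}}$ in which every knot type occurs at least once. Now let $K$ be an arbitrary knot. Applying Theorem~\ref{thm:ultra} to $K$ and this sequence $(L_i)_{i\in\mathbb{N}}$ yields a smooth map $f\colon S^1\to\mathbb{R}^3$ parametrising $K$ whose derivatives $f^{(n)}$ all parametrise knots $K_n$, together with a strictly increasing sequence $(n_i)$ such that $K_{n_i}\cong L_i$ for every $i$. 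Given an arbitrary knot $L$, there is by construction an index $i$ with $L_i\cong L$, and then $K_{n_i}\cong L_i\cong L$; thus for every knot $L$ there is an $n$ with $K_n\cong L$, and every $K_n$ is a knot, so the knot parametrised by $f$ is an ultraknot. Since $f$ parametrises $K$, this exhibits $K$ as (ambient isotopic to, in fact equal to) an ultraknot, which is the claim. The only point to check is that ``$(L_i)$ is a subsequence of $(K_n)$'' together with ``$(L_i)$ hits every knot type'' gives the ultraknot condition, and this is immediate; so there is no real obstacle in the corollary, and the work is all in Theorem~\ref{thm:ultra}.

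For Theorem~\ref{thm:ultra} the natural framework is Fourier parametrisations: write $f(t)=\sum_{k\ge 1}\bigl(a_k\cos(kt)+b_k\sin(kt)\bigr)$ with $a_k,b_k\in\mathbb{R}^3$ decaying faster than any polynomial, so that $f$ and all of its derivatives are $C^\infty$. The key structural feature is that differentiation acts diagonally on Fourier modes: $f^{(n)}$ is obtained from $f$ by scaling the frequency-$k$ coefficient by $k^n$ and rotating it by $n\pi/2$. A first observation, which forces the shape of the construction, is that $f$ cannot be a trigonometric polynomial: as $n\to\infty$ the highest present frequency dominates $f^{(n)}$, which therefore becomes planar and eventually an unknot, whereas we need every knot type among the $K_n$. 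So infinitely many frequencies are essential, and the plan is to arrange the coefficients so that for each $n$ the curve $f^{(n)}$ is, up to a relatively negligible perturbation, governed by a single band of frequencies whose location drifts to infinity with $n$, and then to plant in each band the Fourier data of a chosen knot parametrisation: the band for $n=0$ encoding $K$, the band for $n=n_i$ encoding $L_i$, and the remaining bands encoding, say, round circles so that every $K_n$ is at least a knot.

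Carrying this out requires two ingredients. The first is a quantitative domination estimate: with the bands placed around $\lambda^{0},\lambda^{1},\lambda^{2},\dots$ for a sufficiently large constant $\lambda$ and with band amplitudes tuned (for instance) like $\lambda^{-n^2/2}$, the contribution of band $m$ to $f^{(n)}$ has size comparable to $\lambda^{n^2/2}\lambda^{-(m-n)^2/2}$, so band $n$ beats all the others by an arbitrarily large factor once $\lambda$ is large; the remaining bands then form a perturbation that can be made smaller than the margin of embeddedness of the dominant curve at every level, which one secures by choosing the parameters adaptively, band by band, and this also guarantees that each $K_n$ is a genuine knot. The second ingredient, and the genuine technical obstacle, is a lemma to the effect that every knot type can be parametrised by a trigonometric curve all of whose frequencies lie in a prescribed window $\{m,m+1,\dots,M\}$ of integers, once $m$ is large and the window is wide enough in terms of the knot. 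I would prove this from Alexander's theorem: present the knot as a closed braid, place the braid closure in a standardly embedded, $0$-framed solid torus (so that the resulting knot in $\mathbb{R}^3$ is the prescribed type), parametrise it so that the core is traversed at a fixed high frequency, and truncate the Fourier expansion. The delicate part is controlling the frequency spread of this parametrisation --- avoiding low-frequency ``beats'' arising from the interaction of the core rotation with the braiding, and avoiding high-frequency spill from strands that must cross many others --- which calls for a careful, possibly non-uniform, choice of parametrisation together with explicit Fourier estimates. Once this lemma is available, assembling $f$ and verifying that all $f^{(n)}$ are embedded with the prescribed knot types is bookkeeping.
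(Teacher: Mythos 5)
Your deduction of the corollary is correct and is exactly the paper's: enumerate the countably many knot types as $(L_i)_{i\in\mathbb{N}}$ and apply Theorem~\ref{thm:ultra}. Your accompanying sketch of Theorem~\ref{thm:ultra} also follows essentially the same strategy as the paper (drifting dominant frequency bands, a lemma producing trigonometric parametrisations with frequencies in a prescribed high window, and adaptive smallness estimates to preserve embeddedness), so there is nothing further to flag.
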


The conceptual opposite of an ultraknot, whose sequence of knots contains all knot types, is a knot whose sequence of knots consists of only one single knot type.

\begin{definition}
Let $f:S^1\to\mathbb{R}^3$, $f(t)=(x(t),y(t),z(t))$ be a smooth function, parametrising a knot $K$ in $\mathbb{R}^3$. We call the sequence of loops $(K_n)_{n\in\mathbb{N}}$ in $\mathbb{R}^3$ that are parametrised by $f^{(n)}(t)=(x^{(n)}(t),y^{(n)}(t),z^{(n)}(t))$ the \textbf{loop sequence} of $f$. 
A knot L is called a \textbf{limit knot} if there is a smooth function $f:S^1\to\mathbb{R}^3$ whose loop sequence converges to $L$, that is, $K_n$ is ambient isotopic to $L$ for all sufficiently large $n$.
\end{definition}

The terminology is inspired by the analogous setting of convergent sequences of points. Using the discrete topology on the set of all knot types, a sequence of knot types converges if and only if it becomes eventually constant and the set of possible knots that a sequence can stabilise to is exactly the set of limit knots.

In particular, a knot is a limit knot if and only if it can be parametrised such that the resulting sequence of knot types is constant.
\begin{question}
Which knots are limit knots?
\end{question}

At this moment we do not have a complete answer to this question. However, when we restrict to maps $f:S^1\to\mathbb{R}^3$ whose coordinate functions $(x(t),y(t),z(t))$ are given by trigonometric polynomials, or equivalently by finite Fourier series, we obtain an interesting connection to the study of Lissajous knots, introduced in \cite{272}.

A knot is called a \textbf{Lissajous knot} if it can be parametrised as
\begin{align}
x(t)&=\cos(n_xt+\varphi_x),\nonumber\\
y(t)&=\cos(n_yt+\varphi_y),\\
z(t)&=\cos(n_zt+\varphi_z)\nonumber
\end{align}
for some $n_x,n_y,n_z\in\mathbb{N}\cup\{0\}$ and $\varphi_x,\varphi_y,\varphi_z\in[0,2\pi)$. Without loss of generality we can take $\varphi_x=0$.

Suppose that $f:S^1\to\mathbb{R}^3$ has coordinate functions $(x(t),y(t),z(t))$ that are trigonometric polynomials whose highest order terms are given by $A_{x,n_x}\cos(n_xt)$, $A_{y,n_y}\cos(n_yt+\varphi_y)$ and $A_{z,n_z}\cos(n_zt+\varphi_z)$ with $A_{x,n_x}, A_{y,n_y},A_{z,n_z}\in\mathbb{R}\backslash\{0\}$, $n_x,n_y,n_z\in\mathbb{N}$ and $\varphi_y,\varphi_z\in[0,2\pi)$. Assume that the frequencies $n_x$, $n_y$ and $n_z$ are pairwise coprime, i.e., no pair of them has a common prime factor. Furthermore, assume that $\varphi_y/\pi$, $\varphi_z/\pi$ and $(\varphi_y-\varphi_z)/\pi$ are all irrational. We call a knot a \textbf{Fourier limit knot} if it is the limit knot of the loop sequence of such a map $f$. In particular, all Fourier limit knots are limit knots.



\begin{theorem}\label{thm:limit}
Let $K$ be a Fourier limit knot. Then $K$ is a Lissajous knot. Conversely, if $K$ is a Lissajous knot such that all frequencies $n_x$, $n_y$ and $n_z$ are odd, then $K$ is a Fourier limit knot.
\end{theorem}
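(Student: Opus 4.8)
The plan is to track what differentiation does to a trigonometric polynomial and then pass to a limit. Since $\frac{\rmd^{n}}{\rmd t^{n}}\cos(kt+\psi)=k^{n}\cos(kt+\psi+n\pi/2)$, taking $n$ derivatives scales the amplitude of the $k$-th harmonic by $k^{n}$ and rotates its phase by $n\pi/2$. First I would normalise: replacing $f^{(n)}$ by the coordinate-wise rescaled loop
\[
\tilde f_{n}(t)=\Bigl(\tfrac{x^{(n)}(t)}{|A_{x,n_x}|\,n_x^{\,n}},\ \tfrac{y^{(n)}(t)}{|A_{y,n_y}|\,n_y^{\,n}},\ \tfrac{z^{(n)}(t)}{|A_{z,n_z}|\,n_z^{\,n}}\Bigr)
\]
applies only an orientation-preserving linear automorphism of $\mathbb R^{3}$, so $\tilde f_{n}$ still parametrises $K_{n}$. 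In each coordinate the top harmonic dominates all lower ones, so $\tilde f_{n}$ converges in $C^{\infty}$ to the Lissajous loop
\[
g_{n}(t)=\bigl(\varepsilon_{x}\cos(n_xt+\tfrac{n\pi}{2}),\ \varepsilon_{y}\cos(n_yt+\varphi_y+\tfrac{n\pi}{2}),\ \varepsilon_{z}\cos(n_zt+\varphi_z+\tfrac{n\pi}{2})\bigr),
\]
with $\varepsilon_{x}=\sign(A_{x,n_x})$ and so on. Now $g_{n}$ depends on $n$ only through $n\bmod 4$, so there are at most four limiting loops $g_{0},g_{1},g_{2},g_{3}$, and after absorbing the signs $\varepsilon_\bullet$ and the offset $n\pi/2$ into the phases each $g_{j}$ is a genuine Lissajous parametrisation. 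Throughout I use the standard fact that a smooth loop $C^{1}$-close to a fixed smooth embedding is ambient isotopic to it.

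For the forward direction, suppose $K$ is a Fourier limit knot witnessed by $f$; then $K_{n}\cong K$ for all large $n$. Restricting to $n\equiv0\pmod4$ and combining $\tilde f_{n}\to g_{0}$ with the closeness principle gives $g_{0}\cong K$ — \emph{provided} $g_{0}$ is an embedding. So the real content is the lemma: under the stated hypotheses ($n_x,n_y,n_z$ pairwise coprime, and $\varphi_y/\pi$, $\varphi_z/\pi$, $(\varphi_y-\varphi_z)/\pi$ irrational) the Lissajous loop $g_{0}$ has no self-intersection. I would prove this by writing $g_{0}(s)=g_{0}(t)$ as $\cos(n_\bullet s+\psi_\bullet)=\cos(n_\bullet t+\psi_\bullet)$ for each coordinate, which forces $n_\bullet(s-t)\in2\pi\mathbb Z$ or $n_\bullet(s+t)+2\psi_\bullet\in2\pi\mathbb Z$, and running through the eight sign patterns: a pattern in which at least two coordinates are of ``$(s-t)$'' type forces $s\equiv t$ by pairwise coprimality, while each of the remaining patterns forces $\varphi_y/\pi$ or $\varphi_z/\pi$ to be rational, or $(n_z\varphi_y-n_y\varphi_z)/\pi$ to be an integer. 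The first two possibilities contradict the irrationality hypotheses directly. Granting the lemma, $g_{0}$ is a Lissajous knot ambient isotopic to $K$, which is the claim.

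For the converse, let $K$ be a Lissajous knot with all frequencies odd, parametrised by $h(t)=(\cos(n_xt),\cos(n_yt+\varphi_y),\cos(n_zt+\varphi_z))$, where $n_x,n_y,n_z$ are necessarily pairwise coprime (else $h$ self-intersects). I first perturb $(\varphi_y,\varphi_z)$ so that $\varphi_y/\pi$, $\varphi_z/\pi$, $(\varphi_y-\varphi_z)/\pi$ all become irrational: the set of phases for which $h$ is an embedding is open and the knot type is locally constant on it, whereas the phases making any of those ratios rational form a countable union of lines, of measure zero. Take $f=h$. By the first paragraph $\tilde f_{n}\to g_{j}$ for $n\equiv j\pmod 4$, and the reparametrisation $t\mapsto t-j\pi/2$ turns $g_{j}$ into $h$ with its three coordinates multiplied by $(-1)^{(n_x-1)j/2}$, $(-1)^{(n_y-1)j/2}$, $(-1)^{(n_z-1)j/2}$ — each exponent being an integer precisely because $n_x,n_y,n_z$ are odd. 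Flipping an even number of coordinates is a rotation and preserves the knot type; flipping an odd number yields the mirror $\bar K$, but $h(t+\pi)=-h(t)$ (again by oddness) shows $\bar K\cong K$. Hence every $g_{j}$ parametrises $K$ and is in particular an embedding (a reparametrised linear image of $h$), so $\tilde f_{n}\cong g_{j}\cong K$ for all large $n$: the loop sequence of $f$ converges to $K$, i.e. $K$ is a Fourier limit knot.

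The step I expect to be the main obstacle is the embedding lemma in the forward direction: checking that the three stated irrationality conditions genuinely rule out a double point, and in particular handling the delicate sign pattern in which two of the oscillations run backwards, which produces a condition on $n_z\varphi_y-n_y\varphi_z$ rather than directly on $\varphi_y-\varphi_z$; reconciling this with the hypothesis ``$(\varphi_y-\varphi_z)/\pi$ irrational'' — or, in the borderline case, arguing that it prevents the loop sequence from converging in the first place — is where I expect the work to concentrate. A secondary point needing care is the passage from $C^{1}$-closeness of parametrisations to ambient isotopy of loops, which is valid only when the limiting loop is a genuine embedding, so the embedding lemma cannot be bypassed; one must also keep the coordinate-wise rescalings orientation-preserving throughout.
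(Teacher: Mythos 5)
Your proposal follows essentially the same route as the paper, which splits the theorem into Lemma~\ref{lem:liss} (after a coordinate-wise rescaling the top harmonics of $f^{(n)}$ dominate, the limit is a Lissajous loop depending only on $n\bmod 4$, and $C^1$-openness of knot types transfers the knot type) and Lemma~\ref{lem:limit} (each derivative shifts all phases by $\pi/2$, which for odd frequencies becomes, after the reparametrisation $t\mapsto t-\pi/2$, a sign change of some coordinates, hence $K$ or its mirror image, and odd Lissajous knots are amphichiral). Your converse is complete and in fact slightly more self-contained than the paper's, since you extract the amphichirality directly from $h(t+\pi)=-h(t)$ instead of quoting the known symmetry of Lissajous knots with all frequencies odd.

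The real difference is in the forward direction, at exactly the point you flag. The paper does not carry out the embedding lemma itself: it invokes the complete characterisation of singular Lissajous curves from \cite{272} and asserts that the hypotheses exclude a double point. Your case analysis is the right way to prove that characterisation, and your worry is justified: the pattern in which the $y$- and $z$-coordinates are both of ``sum'' type forces $(n_z\varphi_y-n_y\varphi_z)/\pi\in\mathbb{Z}$, and this is genuinely not implied by the irrationality of $\varphi_y/\pi$, $\varphi_z/\pi$ and $(\varphi_y-\varphi_z)/\pi$: for instance $\varphi_y=n_y\pi\alpha$, $\varphi_z=n_z\pi\alpha$ with $\alpha$ a small irrational satisfies all three irrationality conditions while $n_z\varphi_y-n_y\varphi_z=0$, and the corresponding Lissajous curve does have a double point when $n_x>1$. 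So this case cannot be closed from the hypotheses as literally stated; the condition one actually needs (and the one appearing in the cited characterisation) is $(n_z\varphi_y-n_y\varphi_z)/\pi\notin\mathbb{Z}$. The repair is harmless for the theorem --- in the forward direction replace the third irrationality hypothesis by this one (or simply by the requirement that the limiting Lissajous loop be embedded), and in the converse you are free to choose the perturbed phases so that it holds --- but you should make that substitution explicit rather than leave it as an unresolved obstacle, since as written your forward argument does not terminate.
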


The necessary condition of being a Lissajous knot imposes certain restrictions on Fourier limit knots. It follows immediately that certain knots like the trefoil or the figure-eight knot are not Fourier limit knots \cite{272}. However, it is not known if they are limit knots of loop sequences of functions that are not trigonometric polynomials.

The rest of the paper is structured as follows. In Section~\ref{sec:2} we prove that given any knot type $K$ and sequence of links $L_i$ we can parametrise $K$ such that the corresponding loop sequence contains $L_i$ as a subsequence. This procedure is completely algorithmic. However, for some derivatives the corresponding loop $K_n$, $n\neq n_i$ for all $i$, might not be a knot. In Section~\ref{sec:3} we prove that after a small deformation of the function from Section~\ref{sec:2} all $K_n$ are knots and the $L_i$s can still be found as a subsequence. This proves Theorem~\ref{thm:ultra}. In Section~\ref{sec:4} we study Fourier limit knots and prove Theorem~\ref{thm:limit}. We also prove that every knot has a parametrisation whose loop sequence converges to the unknot, see Corollary~\ref{cor:unknot}.

\section{Subsequences of knots}\label{sec:2}

In this section we prove the following Proposition.
\begin{proposition}\label{prop}
Let $K$ be a knot and $(L_i)_{i\in\mathbb{N}}$ be a sequence of knots. Then there is a smooth map $f:S^1\to\mathbb{R}^3$ that parametrises $K$ and there is a strictly monotone increasing sequence $(n_i)_{i\in\mathbb{N}}$ of natural numbers such that $K_{n_i}$, parametrised by $f^{(n_i)}$, is ambient isotopic to $L_i$ for all $i\in\mathbb{N}$.
\end{proposition}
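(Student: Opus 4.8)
The plan is to build $f$ as a perturbation of a convenient "base" parametrisation whose high-order derivatives we can control explicitly. The natural building blocks are trigonometric polynomials: if a coordinate function contains a term $A\cos(mt+\varphi)$, then after $n$ differentiations this term becomes $\pm A m^n\cos(mt+\varphi)$ or $\pm A m^n\sin(mt+\varphi)$, i.e. it is reproduced up to sign and a factor $m^n$. Crucially, if several frequencies $m_1<m_2<\dots<m_k$ appear, then in the $n$th derivative the highest frequency $m_k$ dominates by a factor $(m_k/m_{k-1})^n\to\infty$; so, after rescaling, $f^{(n)}$ looks more and more like the curve parametrised only by the top-frequency terms. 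This is the mechanism I would exploit: I want to arrange that, along the subsequence $n_i$, the derivative $f^{(n_i)}$ is dominated by a block of terms that parametrises (a rescaled copy of) $L_i$.

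Concretely, I would proceed as follows. First, fix for each $i$ a smooth parametrisation $g_i:S^1\to\mathbb{R}^3$ of $L_i$, and approximate it by a trigonometric polynomial $p_i(t)=(p_i^x,p_i^y,p_i^z)$ so close in $C^0$ that $p_i$ still parametrises $L_i$ (this uses only that being a given knot type is a $C^0$-open condition among immersions, or more simply a standard Fourier truncation argument). Second, choose a rapidly increasing sequence of "frequency windows": pick integers so that $p_i$ is supported on frequencies in an interval $(a_i,b_i)$ with $a_i$ enormous compared to $b_{i-1}$ and $b_i$ small compared to $a_{i+1}$, and insert large scalar factors $\lambda_i>0$ in front of the block $p_i$. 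Define
\begin{equation}
f(t)=f_0(t)+\sum_{i\in\mathbb{N}}\lambda_i\,\rho_i(t)\,p_i(t),
\end{equation}
where $f_0$ is a fixed parametrisation of $K$ (with low frequencies, or a small bump supported at frequency $0$/$1$), and where the $\rho_i(t)$ are frequency-shift factors $\cos$/$\sin$ chosen so that the block has the right parity under repeated differentiation. The sum must be made genuinely smooth: the coefficients of $f$ as a Fourier series must decay faster than any polynomial, so I would choose $\lambda_i$ and the truncation degrees so that the total Fourier coefficient at frequency $\sim b_i$ is, say, at most $2^{-b_i}$; this forces $\lambda_i$ to be small relative to $b_i$ but leaves plenty of room since the windows are sparse. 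Third, I would choose the subsequence $n_i$ so that in $f^{(n_i)}$ the $i$th block dominates: because differentiating multiplies the frequency-$m$ part by $m^{n_i}$, and $a_i\gg b_{i-1}$ while $b_i\ll a_{i+1}$, for $n_i$ large enough the ratio of the $i$th block's size to everything else is as large as we like, so after rescaling $f^{(n_i)}$ is $C^0$-close to $\pm p_i$ (composed with $t\mapsto t$, up to the harmless sign and a global sign/reflection that does not change the knot type), hence parametrises $L_i$. Finally, I would verify $f$ genuinely parametrises $K$: by taking $\lambda_i$ small, the tail $\sum \lambda_i\rho_i p_i$ is a $C^0$-small (indeed $C^\infty$-small) perturbation of $f_0$, so $f$ is still an embedding isotopic to $K$.

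The main obstacle I expect is the simultaneous bookkeeping of three competing requirements on the data $(\lambda_i, a_i, b_i, n_i)$: (1) the global Fourier coefficients of $f$ must decay super-polynomially so that $f\in C^\infty$; (2) the $\lambda_i$ must be small enough that $f$ is an embedding of type $K$; yet (3) along $n_i$ the $i$th block must beat the combined contribution of all other blocks — including the later blocks $j>i$, which have much higher frequencies and therefore grow much faster under differentiation, so one must take $n_i$ only moderately large (large enough to kill the earlier, slower blocks, but not so large that block $i+1$ has taken over). The resolution is that these constraints are compatible because one gets to choose the parameters inductively and with huge gaps: having fixed everything up to index $i$, choose $n_i$ to dominate blocks $<i$ at the current scales, then choose $a_{i+1}$ so large (hence block $i+1$ so "slow to wake up" at $t=n_i$, since $m^{n_i}$ is monotone but we control which $n_i$ we use) and $\lambda_{i+1}$ so small that block $i+1$ is still negligible at derivative order $n_i$ while remaining consistent with coefficient decay. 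Making this induction precise — in particular pinning down the inequality that guarantees "block $i$ wins at order $n_i$" against both the finitely many earlier blocks and the infinitely many later ones simultaneously — is the technical heart of the argument; everything else (smoothness of the sum, openness of the knot type, the explicit effect of differentiation on $\cos(mt+\varphi)$) is routine. Note that we are allowed to ignore, at the non-distinguished indices $n\neq n_i$, whether $K_n$ is even an embedded loop — that is deferred to Section~\ref{sec:3} — which removes what would otherwise be a serious additional burden here.
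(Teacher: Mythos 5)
Your overall architecture — trigonometric blocks supported in widely separated frequency windows, dominance of one block after many differentiations, and an inductive choice of parameters — is the same as the paper's. But there is a genuine gap at the central step, namely the claim that ``after rescaling $f^{(n_i)}$ is $C^0$-close to $\pm p_i$.'' Differentiation multiplies the frequency-$m$ part of a block by $m^{n}$ (and shifts its phase), so it distorts the \emph{relative} amplitudes of the different frequencies \emph{inside} the block: if $p_i$ is supported on frequencies $a_i<m\leq b_i$, then $p_i^{(n_i)}$, normalised by $b_i^{n_i}$, converges as $n_i\to\infty$ to its top-frequency term alone, i.e.\ to a Lissajous-type curve, not to a copy of $p_i$. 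So even when block $i$ dominates all the others at order $n_i$, what you see is not $L_i$ (and by the paper's Theorem~\ref{thm:limit} it generically cannot be, e.g.\ for $L_i$ a trefoil). The missing device is pre-compensation: the paper replaces the coefficient $c_{i,k}$ of the $i$th block by $\bigl(\tfrac{N_i-1}{\rmi k}\bigr)^{n_i}c_{i,k}$, so that the $n_i$th derivative of the block is \emph{exactly} the scalar multiple $(N_i-1)^{n_i}f_i(t)$ of a genuine parametrisation of $L_i$. This makes each block's size depend on its own $n_i$, which is precisely what lets the inductive system of inequalities (the paper's Eq.~\eqref{eq:ninj}) be solved by taking each $n_{p+1}$ large; your scheme, where $\lambda_i$ is fixed by smoothness first and $n_i$ chosen afterwards, cannot be repaired without introducing this dependence.

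Two secondary points. First, you assert you can ``pick integers so that $p_i$ is supported on frequencies in an interval $(a_i,b_i)$'' with $a_i$ huge, but a Fourier truncation of an arbitrary parametrisation of $L_i$ has low-frequency content; producing a trigonometric parametrisation of a prescribed knot whose \emph{lowest} order exceeds any given bound is a nontrivial step (the paper's Lemma~\ref{lem:param}), and your ``frequency-shift factors'' $\rho_i$ do not obviously do the job, since multiplying a parametrisation by $\cos(Nt)$ does not preserve the knot type (the paper only multiplies the $z$-coordinate by such a factor, with the phase chosen to preserve all crossing signs, and gets the high lowest order of $x,y$ from the braid-closure form $\cos(st)(R+F(t))$). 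Second, knot type is an open condition in the $C^1$-topology, not the $C^0$-topology: a $C^0$-small immersed perturbation can tie in a small local knot, so your approximation and perturbation estimates need to be carried out in $C^1$ (as in the paper's Lemma~\ref{lem:epsilon}); this is fixable but should be stated correctly.
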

Note that the proposition does not automatically imply that $K$ is an ultraknot because $K_n$, given by $f^{(n)}$, could be a non-simple loop when $n\neq n_i$ for all $i$.

A trigonometric polynomial $x:S^1\to\mathbb{R}$ can be written as a finite linear combination of $1$, $\cos(kt)$, $\sin(kt)$, $k\in\mathbb{N}$, over the reals:
\begin{equation}
x(t)=a_0+\sum_{k=1}^{N}(a_k\cos(kt)+b_k\sin(kt)).
\end{equation} 
We call $N$ the degree of the trigonometric polynomial $x$ and the lowest $k$ with $a_{k}\neq 0$ and $b_k\neq 0$ the lowest order of $x$.

\begin{lemma}\label{lem:param}
Let $K$ be a knot and $n\in\mathbb{N}$. Then there are trigonometric polynomials $x(t),y(t),z(t)$ all of which have a lowest order at least $n$ such that $(x(t),y(t),z(t))$ parametrises $K$.
\end{lemma}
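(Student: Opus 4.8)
The plan is to first reduce to the classical fact that every knot type can be parametrised by trigonometric polynomials — i.e.\ that every knot is a \emph{Fourier knot}. One quick way to see this is to take any smooth embedding of $K$ and replace its coordinate functions by sufficiently high‑order partial sums of their Fourier series: these are $C^1$‑close to the originals and hence still parametrise $K$. So fix a trigonometric‑polynomial parametrisation $\gamma_0(t)=(x_0(t),y_0(t),z_0(t))$ of degree $d$; after translating we may assume each of $x_0,y_0,z_0$ has lowest order at least $1$. The whole content of the lemma is then to raise the lowest order from $1$ to $n$ without changing the knot type.

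For the order‑raising step I would introduce high‑frequency rotations of pairs of coordinates. Replacing $(x_0,y_0)$ by $\bigl(x_0\cos(Nt)-y_0\sin(Nt),\ x_0\sin(Nt)+y_0\cos(Nt)\bigr)$ produces first two coordinates all of whose Fourier modes lie in $[N-d,N+d]$, hence of lowest order at least $N-d$; applying a second such rotation to the (new) second and third coordinates at a frequency $M$ chosen with $M\ge n+d$ and $|N-M|\ge n+d$ makes the lowest order of all three coordinates at least $n$ once $N$ is large enough. The output $f(t)$ is again a triple of trigonometric polynomials of bounded degree, and $N,M$ only need to avoid finitely many values, so we retain a great deal of freedom.

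Two things then have to be verified. First, embeddedness: placing $K$ in a ball whose points all have positive first coordinate, a coincidence $f(t_1)=f(t_2)$ with $t_1\neq t_2$ forces $(t_1,t_2)$ into the finite set of parameter pairs at which the relevant cylindrical‑radius‑and‑height coordinates of $\gamma_0$ agree (this set is finite for a generic placement of $K$), and at each such pair the remaining angular‑coincidence condition fails for all but finitely many integer frequencies — so $f$ is a smooth knot for a generic choice of $N$ and $M$. Second, and this is where the real difficulty lies, one must check that $f$ is ambient isotopic to $K$ and not to some other knot. The subtlety is that the cheapest ways of raising the order change the knot type: re‑traversing $K$ some $N$ times and perturbing produces an $(N,m)$‑cable of $K$, while rigidly spinning a small copy around an axis produces the closure of a nontrivial braid. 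The spinning therefore has to be arranged on a representative of $K$ (a suitable closed‑braid or tube presentation) for which the extra windings introduced can be cancelled by an ambient isotopy; I expect this reconciliation of "high lowest order" with "correct knot type", together with bookkeeping of all the frequencies, to be the main obstacle, and it is precisely the point at which the explicit, algorithmic construction underlying Section~\ref{sec:2} is needed.

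Finally, if one prefers to carry out the intermediate steps with a smooth rather than a trigonometric‑polynomial representative, a sufficiently high‑order Fourier truncation of the resulting smooth $f$ is $C^1$‑close to it, still parametrises $K$, and — since truncation only deletes high modes — still has all three coordinate functions of lowest order at least $n$, which completes the proof.
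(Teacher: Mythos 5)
Your reduction to the Fourier--knot case and the frequency bookkeeping are fine, but the proposal does not prove the lemma: the step you yourself flag as ``the real difficulty'' --- showing that the order-raised curve still parametrises $K$ rather than some other knot --- is exactly the content of the lemma, and you leave it unresolved. Concretely, replacing $(x_0,y_0)$ by $\bigl(x_0\cos(Nt)-y_0\sin(Nt),\,x_0\sin(Nt)+y_0\cos(Nt)\bigr)$ is a parameter-dependent rotation, not an ambient isotopy: if $K$ sits in a ball disjoint from the $z$-axis, the new curve is a closed braid on $N$ strands in the solid torus swept out by that ball, whose braid is governed by the projection of $\gamma_0$ to a meridian disk, and in general this is not isotopic to $K$. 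The second rotation, which mixes the new second coordinate with $z_0$, makes the resulting type still harder to control. Saying that the spinning ``has to be arranged on a representative for which the extra windings can be cancelled'' restates the problem rather than solving it, so the argument has a genuine gap at its central point.

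The paper closes exactly this gap by a different mechanism. It starts from the closed-braid Fourier parametrisation of \cite{bode:algo}, $x=\cos(nt)(R+F(t))$, $y=\sin(nt)(R+F(t))$, $z=G(t)$, so the high-frequency winding in the first two coordinates is built into a parametrisation already known to give $K$; only the $z$-coordinate needs its order raised. This is done not by a rotation but by multiplying $G$ by a single high-frequency factor $\cos(Nt+\varphi)=A\cos(Nt)+B\sin(Nt)$, with the phase $\varphi$ chosen so that this factor is positive at the finitely many parameter values $t_j,s_j$ corresponding to crossings of the regular diagram $(x(t),y(t))$. This keeps the sign of $\widetilde{G}(t_j)-\widetilde{G}(s_j)$ equal to that of $G(t_j)-G(s_j)$, i.e.\ preserves the over/under data at every crossing and hence the knot type, while shifting the spectrum of $z$ above $n$. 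The idea your proposal is missing is that the knot type only needs to be controlled at the finitely many crossing parameters of a regular projection, which turns the ``reconciliation'' you defer into an elementary phase-choice (interpolation) argument rather than a global isotopy problem.
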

\begin{proof}
In \cite{bode:algo} we present an algorithm that constructs for any braid $B$ that closes to a knot $K$ a pair of trigonometric polynomials $F,G:S^1\to\mathbb{R}$ that can be used to parametrise $B$. If $B$ is a braid on $n$ strands, then
\begin{align}
x(t)&=\cos(nt)(R+F(t)),\nonumber\\
y(t)&=\sin(nt)(R+F(t)),\\
z(t)&=G(t)\nonumber
\end{align}
is a parametrisation of $K$, where $R$ is a positive real number that is bigger than $\max_{t\in S^1}|F(t)|$. We see that $x$, $y$ and $z$ are all trigonometric polynomials and the lowest order of $x$ and $y$ is at least $n$. We need to replace $G$ with a different trigonometric polynomial whose lowest order is also bigger than $n$.

As in \cite{bode:ak} the trigonometric polynomial $F$ can be constructed such that $(x(t),y(t))$ parametrises a regular knot diagram in $\mathbb{R}^2$, i.e., there are only finitely many crossings and each of them is a transverse intersection of exactly two strands. We thus need to find a trigonometric polynomial $\widetilde{G}$ of lowest order bigger than $n$ that results in the desired crossing signs, i.e., the same crossing signs as $G$.

A crossing in the diagram parametrised by $(x(t),y(t))$ corresponds to values $0\leq t<s<2\pi$ with $(x(t),y(t))=(x(s),y(s))$ and the sign of the desired crossing is determined by the sign of $G(t)-G(s)$. We write $t_j, s_j\in\mathbb{R}$, $j=1,2,\ldots,m$, for the set of values of $t$ corresponding to crossings, i.e., $t_j<s_j$ and $(x(t_j),y(t_j))=(x(s_j),y(s_j))$. Note that for every finite set of points $(t_j)_{j=1,2,\ldots,m}\cup(s_j)_{j=1,2,\ldots,m}$ in $[0,2\pi)$ and every $n\in\mathbb{N}$ there is an $N>n$ and a phase shift $\varphi\in[0,2\pi)$ so that $\cos(Nt_j+\varphi)$ is positive for all $j$ and $\cos(Ns_j+\varphi)$ is positive for all $j$. We can write $\cos(Nt+\varphi)$ as a linear combination of $\cos(Nt)$ and $\sin(Nt)$, say $\cos(Nt+\varphi)=A\cos(Nt)+B\sin(Nt)$ with $A,B\in\mathbb{R}$. But then $\widetilde{G}(t):=(A\cos(Nt)+B\sin(Nt))G(t)$ is a trigonometric polynomial of lowest order at least $N>n$ and such that the sign of $\widetilde{G}(t_j)-\widetilde{G}(s_j)$ is the same as the sign of $G(t_j)-G(s_j)$ for all $j$.

It follows that
\begin{align}
x(t)&=\cos(nt)(R+F(t)),\nonumber\\
y(t)&=\sin(nt)(R+F(t)),\\
z(t)&=\widetilde{G}(t)\nonumber
\end{align}
is the desired parametrisation of $K$.
\end{proof}

Knots that are parametrised by trigonometric polynomials are called Fourier knots. Rewriting cosines and sines as linear combinations of $\rme^{\rmi kt}$ and $\rme^{-\rmi kt}$, we can represent every trigonometric polynomial as a finite Fourier series $\sum_{k=-N}^N c_k\rme^{\rmi kt}$, where the complex coefficients $c_k$ are related to $a_k$ and $b_k$ in the expression above via $c_0=a_0$, $c_k=(a_k-\rmi b_k)/2$ if $k>0$ and $c_k=(a_{-k}+\rmi b_{-k})/2$ if $k<0$. In particular, $\text{Im}(c_k)=-\text{Im}(c_{-k})$ and $\text{Re}(c_k)=\text{Re}(c_{-k})$ for all $k$.

As a generalisation of Fourier knots, we may consider knots that are parametrised by smooth (in particular, convergent) infinite Fourier series $\sum_{k=-\infty}^{\infty}c_k\rme^{\rmi kt}$.

\begin{lemma}\label{lem:epsilon}
Let $K$ be a knot that is parametrised by a triple of smooth Fourier series $f(t)=(x(t),y(t),z(t))$. Then there exists an $\varepsilon>0$ such that for every triple of smooth Fourier series $g(t)=(x_1(t),x_2(t),x_3(t))$, $x_i=\sum_{k=-\infty}^{\infty}c_{i,k}\rme^{\rmi kt}$, with $|c_{i,k}|<\tfrac{\varepsilon}{2^{|k|}}$ for all $k$ and all $i\in\{1,2,3\}$ the triple of smooth Fourier series $f(t)+g(t)$ is a smooth parametrisation of $K$.
\end{lemma}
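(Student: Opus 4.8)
The plan is to exploit the fact that being a smooth embedding is an open condition in the $C^1$ topology, and then to show that the stated decay bound $|c_{i,k}|<\varepsilon/2^{|k|}$ on the perturbation's Fourier coefficients controls the $C^1$ norm of the perturbation $g$. First I would recall the standard stability statement: if $f:S^1\to\mathbb{R}^3$ is a smooth embedding parametrising $K$, then there is a $\delta>0$ such that any smooth map $h:S^1\to\mathbb{R}^3$ with $\|h-f\|_{C^1}<\delta$ is also a smooth embedding, and moreover is ambient isotopic to $f$ (hence parametrises $K$). The $C^1$ part gives immersion (nonvanishing derivative) and local injectivity; global injectivity follows because $f$, being an injective immersion of a compact manifold, has a positive lower bound on $|f(t)-f(s)|$ away from the diagonal, and this survives a small $C^0$ perturbation; ambient isotopy then follows from the standard isotopy extension / tubular neighbourhood argument. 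I would cite this as a well-known fact (e.g.\ from Hirsch's \emph{Differential Topology}) rather than reprove it.

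The second step is to convert the coefficient bound into a $C^1$ bound. For $g=(x_1,x_2,x_3)$ with $x_i=\sum_k c_{i,k}\rme^{\rmi kt}$ and $|c_{i,k}|<\varepsilon/2^{|k|}$, I would estimate
\begin{equation}
\|x_i\|_{C^0}\leq\sum_{k=-\infty}^{\infty}|c_{i,k}|<\varepsilon\sum_{k=-\infty}^{\infty}2^{-|k|}=3\varepsilon,
\end{equation}
and similarly for the derivative, using $x_i'(t)=\sum_k \rmi k\, c_{i,k}\rme^{\rmi kt}$,
\begin{equation}
\|x_i'\|_{C^0}\leq\sum_{k=-\infty}^{\infty}|k|\,|c_{i,k}|<\varepsilon\sum_{k=-\infty}^{\infty}|k|2^{-|k|}=C\varepsilon
\end{equation}
for an absolute constant $C$ (here $\sum_{k\geq1}k2^{-k}=2$, so $C=4$). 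Hence $\|g\|_{C^1}\leq C'\varepsilon$ for an absolute constant $C'$, and choosing $\varepsilon<\delta/C'$ makes $\|g\|_{C^1}<\delta$. The same estimates, with $\sum_k |k|^n 2^{-|k|}<\infty$ for every $n$, show that each $x_i$ is genuinely a smooth function, so $f+g$ is a triple of smooth Fourier series as claimed.

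Combining the two steps: given $K$ and its parametrisation $f$, take $\delta$ from the stability statement and set $\varepsilon\defeq\delta/(2C')$. Then for any admissible $g$ we have $\|(f+g)-f\|_{C^1}=\|g\|_{C^1}<\delta$, so $f+g$ is a smooth embedding ambient isotopic to $f$, i.e.\ a smooth parametrisation of $K$. I do not expect any serious obstacle here; the only point requiring a little care is making sure the perturbed map is still \emph{injective} (not merely an immersion), which is where the uniform positive lower bound on $|f(t)-f(s)|$ on the complement of a neighbourhood of the diagonal, together with the immersivity-plus-$C^1$-closeness argument near the diagonal, is used. This is entirely standard, so the proof is short.
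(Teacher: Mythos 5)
Your proposal is correct and follows essentially the same route as the paper: both invoke the standard fact that the set of $C^1$ parametrisations of a fixed knot type is open in the $C^1$-norm, and both convert the coefficient bound $|c_{i,k}|<\varepsilon/2^{|k|}$ into a $C^1$ bound via $\sum_k 2^{-|k|}=3$ and $\sum_{k\neq 0}|k|2^{-|k|}=4$, then choose $\varepsilon$ as a fixed fraction of the openness radius. The only (harmless) difference is that you spell out the proof of the openness statement and note explicitly that the decay bound also guarantees smoothness of $f+g$, whereas the paper simply cites openness as known.
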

\begin{proof}
The space of $C^1$-functions $f:S^1\to\mathbb{R}^3$ is endowed with the $C^1$-norm: 
\begin{equation}
|f|_1:=\max_{t\in S^1}\max\{|f(t)|,|f'(t)|\}.
\end{equation} 
It is a known fact that the subset of functions $f$ that parametrise a given knot type is an open set with respect to the topology induced by this norm. Therefore for every knot parametrised by a triple of smooth Fourier series there is an $\varepsilon'$ such that for all triples of smooth Fourier series $g(t)$ with $|g|_1<\varepsilon'$ we obtain a new parametrisation of $K$ from $f(t)+g(t)$.

Let now $\varepsilon=\varepsilon'/15$ and assume that $g(t)$ is a triple of smooth Fourier series $(x_1(t),x_2(t),x_3(t))$, $x_i=\sum_{k=-\infty}^{\infty}c_{i,k}\rme^{\rmi kt}$, with $|c_{i,k}|<\tfrac{\varepsilon}{2^{|k|}}$ for all $k$ and all $i\in\{1,2,3\}$. We claim that $|g|_1<\varepsilon'$, which then proves the lemma.

We have
\begin{align}
|g|_1\leq \sum_{i=1}^3|x_i|_1&=\sum_{i=1}^3 \max\{\max_{t\in[0,2\pi]}\{|x_i(t)|,|x_i'(t)|\}\}\nonumber\\
&\leq\sum_{i=1}^3 \max\{\sum_{k=-\infty}^{\infty}|c_{i,k}|,\sum_{k=-\infty}^{\infty}|kc_{i,k}|\}\nonumber\\
&\leq \sum_{i=1}^3 \sum_{k=-\infty}^{\infty}\max\{|c_{i,k}|,|kc_{i,k}|\}\nonumber\\
&\leq \sum_{i=1}^3 (|c_{i,0}|+\underset{k\neq 0}{\sum_{k=-\infty}^{\infty}}|kc_k|)\nonumber\\
&< 3\left(\varepsilon+ \underset{k\neq 0}{\sum_{k=-\infty}^{\infty}}\frac{\varepsilon k}{2^k}\right)=3\varepsilon (1+4), 
\end{align}

where the last equality follows from $\sum_{k=1}^{\infty}\tfrac{|k|}{2^k}=2$. By definition we have $15\varepsilon=\varepsilon'$. Therefore, $|g|_1<\varepsilon'$ and $f(t)+g(t)$ is a parametrisation of $K$.
\end{proof}

\begin{remark}\label{remark}
We may multiply any knot parametrisation $(x(t),y(t),z(t))$ by some positive real number $A$ and still obtain a parametrisation of the same knot. The $\varepsilon$-value from Lemma~\ref{lem:epsilon} of the new curve is $A$ times the $\varepsilon$-value of the old curve $(x(t),y(t),z(t))$. In particular, for a given knot type we may assume that $\varepsilon$ is equal to 1. If the coordinate functions are trigonometric polynomials, then multiplication by $A$ does not change their lowest order or degrees. It follows that Lemma~\ref{lem:param} is true even if we demand that the $\varepsilon$-value of $K$ from Lemma~\ref{lem:epsilon} is equal to 1.
\end{remark}

Before we prove Proposition~\ref{prop} we briefly outline the main idea of the proof. Suppose $\sum_{k=-\infty}^{\infty}c_k\rme^{\rmi kt}$ is a smooth Fourier series whose coefficients $c_k$ converge rapidly to 0 as $|k|$ increases. In particular, if there are three such Fourier series that together parametrise a knot $K$, then the lower order terms of the Fourier series are dominant. That is to say, we could ignore all terms above a certain degree and obtain a parametrisation of the same knot $K$. The $n$th derivative then has Fourier coefficients $(\rmi k)^nc_k$ and so the absolute value of the coefficients grows as $n$ increases. However, coefficients of low orders (i.e., low absolute value of $k$) grow much slower than those with large $|k|$. Thus there should be a range of values of $n$ where the lower order terms are negligible (because $|k|$ is small) and very high order terms are also negligible (because $c_k$ is very small). Thus the dominant terms lie in some middle range and as we increase $n$, the range of values of $k$ that are relevant for the topology of the knot that is parametrised by $f^{(n)}$ shifts to higher values of $|k|$. We should therefore combine a trigonometric parametrisation of a knot $K$ with trigonometric parametrisations of the different knots $L_i$ in such a way that the degree of the parametrisation of $K$ is less than the lowest order of $L_1$ and the degree of the parametrisation of $L_i$ is less than the lowest order of $L_{i+1}$ for all $i$. Of course, this rough sketch ignores a lot of technical issues, such as, what it means for a coefficient to be small or questions about convergence. 

\begin{proof}[Proof of Proposition~\ref{prop}]
Let $K$ be a knot and let $(L_i)_{i\in\mathbb{N}}$ be a sequence of knots. Using the procedure from \cite{bode:algo} we can find a parametrisation 
\begin{equation}
f_0(t):=(x_0(t),y_0(t),z_0(t))
\end{equation} 
of $K$ in terms of trigonometric polynomials. Let $M_0$ be the maximal degree of these three trigonometric polynomials. By Lemma~\ref{lem:param} we can find a parametrisation $f_1(t)=(x_1(t),y_1(t),z_1(t))$ of $L_1$ in terms of trigonometric polynomials whose lowest order $N_1$ is strictly greater than $M+1$. We define $M_1$ to be the maximal degree of $x_1$, $y_1$ and $z_1$. Inductively, we can find a trigonometric parametrisation $f_i(t)=(x_i(t),y_i(t),z_i(t))$ of $L_i$ whose lowest order $N_i$ is strictly greater than $M_{i-1}+1$, where $M_{i-1}$ is the maximal degree of $x_{i-1}$, $y_{i-1}$ and $z_{i-1}$.

Let $(n_i)_{i\in\mathbb{N}\cup\{0\}}$ be a strictly monotone increasing sequence of natural numbers. If $x_i(t)=\sum_{k=-M_i}^{M_i}c_{i,k}\rme^{\rmi kt}$, we set 
\begin{equation}
\widetilde{x_i}(t)=\sum_{k=-M_i}^{M_i}\left(\tfrac{(N_i-1)}{\rmi k}\right)^{n_i}c_{i,k}\rme^{\rmi kt}
\end{equation} 
and define $\widetilde{y_i}(t)$ and $\widetilde{z_i}(t)$ analogously. Note that $c_{i,k}=0$ if $|k|<N_i$. We write $\widetilde{f_i}(t)=(\widetilde{x_i}(t),\widetilde{y_i}(t),\widetilde{z_i}(t))$. Note that in particular, we may set $n_0=0$, which yields $\widetilde{f_0}=f_0$.

We claim that there is a strictly monotone increasing sequence of natural numbers $(n_i)_{i\in\mathbb{N}}$ such that $f(t):=\sum_{i=0}^{\infty} \widetilde{f_i}(t)$ is a parametrisation of $K$ and $F^{(n_i)}(t)$ is a parametrisation of $L_i$ for all $i\in\mathbb{N}$.

By Lemma~\ref{lem:epsilon} there exists an $\varepsilon_0>0$ such that adding Fourier series with coefficients that are smaller than an expression in $\varepsilon_0$ to $f$ does not change the knot $K$. 

Likewise, let $\varepsilon_i$ be the positive real number from Lemma~\ref{lem:epsilon} for the knot $L_i$ parametrised by $f_i(t)$. Note that $\widetilde{f_i}^{(n_i)}(t)=(N_i-1)^{n_i}f_i(t)$ (for all $i>0$) and thus it is also a parametrisation of $L_i$ with corresponding $\varepsilon$-value $(N_i-1)^{n_i}\varepsilon_i$.

Therefore, we need to choose the $n_i$s such that 
\begin{equation}\label{eq:ninj}
(N_i-1)^{n_i}|k|^{n_j-n_i}|c_{i,k}|<(N_j-1)^{n_j}\varepsilon_j/2^{|k|}
\end{equation} 
for all $i,j\in\mathbb{N}\cup\{0\}$ with $i\neq j$, where the left hand side is the coefficient of $\rme^{\rmi kt}$ in $f^{(n_i)}(t)$. Note that this equation needs to hold both for $i<j$ and $j>i$. The first guarantees that $f^{(n_j)}$ parametrises $L_j$, while the latter guarantees that the terms in $f$ that correspond to $L_j$ do not change the fact that $f^{(n_i)}$ parametrises $L_i$.

We may choose the values of the $n_i$s successively, starting with $n_1$ so that Eq.~\eqref{eq:ninj} is satisfied for $i=0$, $j=1$, as well as for $i=1$, $j=0$, where $n_0=0$. The case of $i=1$, $j=0$ is satisfied if $\left|\left(\tfrac{(N_1-1)}{k}\right)^{n_1}c_{1,k}\right|<\tfrac{\varepsilon_0}{2^{|k|}}$ holds for all $k$. Since $c_{1,k}=0$ unless $|k|>N_1-1$, this equation is satisfied as long as $n_1$ is sufficiently large, namely 
\begin{equation}
n_1>\log_{\tfrac{|k|}{(N_1-1)}}\left(\frac{2^{|k|}|c_{1,k}|}{\varepsilon_0}\right)
\end{equation}
for all $k$ with $c_{1,k}\neq 0$. 
The case of $i=0$, $j=1$ is satisfied if
\begin{equation}
k^{n_1}|c_{0,k}|<(N_1-1)^{n_1}\varepsilon_1/2^{|k|}
\end{equation}
for all $k$, which is equivalent to $\left(\tfrac{(N_1-1)}{|k|}\right)^{n_1}>|c_{0,k}|2^{|k|}/\varepsilon_0$ for all $k$. Since $N_1-1>M_0\geq |k|$ for all $k$ with $c_{0,k}\neq 0$, we can achieve this by choosing $n_1$ large enough. 

Suppose now that we have picked the first $p$ values of the sequence $(n_i)_{i\in\mathbb{N}}$ so that Eq.~\eqref{eq:ninj} holds for all $i\neq j$ with $i,j\leq p$. Then we pick $n_{p+1}$ so that 
\begin{equation}
(N_{p+1}-1)^{n_{p+1}}k^{n_j-n_{p+1}}|c_{p+1,k}|<(N_j-1)^{n_j}\tfrac{\varepsilon_j}{2^{|k|}}
\end{equation} 
and
\begin{equation}
(N_{j}-1)^{n_j}k^{n_{p+1}-n_j}|c_{j,k}|<(N_{p+1}-1)^{n_{p+1}}\varepsilon_{p+1}/2^{|k|}
\end{equation}
for all $k$ and $j<p+1$. Again it is sufficient to pick $n_{p+1}$ large enough, since $c_{p+1,k}=0$ unless $k>N_{p+1}-1$ and $c_{j,k}\neq 0$ implies $k<M_j\leq N_{p+1}-1$ for all $j<p+1$.

Thus for every $i\in\mathbb{N}$ the function $f^{(n_i)}$ is a close approximation to $\widetilde{f_i}^{(n_i)}=(N_i-1)^{n_i}f_i$ and thus a parametrisation of $L_i$.
\end{proof}

\begin{remark}\label{rem2}
It is sufficient for the proof of Proposition~\ref{prop} to choose the sequence $(n_i)_{i\in\mathbb{N}}$ such that Eq.~\eqref{eq:ninj} is always satisfied. We prove that this is possible for any sequence of positive real numbers $\varepsilon_j$. As mentioned in Remark~\ref{remark} we may set $\varepsilon_j=1$ for all $j$. Note that we can still choose the $n_i$s such that
\begin{equation}
(N_i-1)^{n_i}|k|^{n_j-n_i}|c_{i,k}|<(N_j-1)^{n_j}1/2^{|k|+1}
\end{equation}
holds for all $i,j\in\mathbb{N}\cup \{0\}$ with $i\neq j$. Thus we pick the $n_i$s such that the resulting approximations are twice as close to the $L_i$s as would be strictly necessary. This leaves us some extra wiggle room for the additional modifications that we are about to do in the next section.
\end{remark}

\section{Avoiding self-intersections}\label{sec:3}

In the previous section we showed that for every knot $K$ and every sequence of knots $(L_i)_{i\in\mathbb{N}}$ there exists a parametrisation $f:S^1\to\mathbb{R}^3$ of $K$ whose sequence of loops $(K_n)_{n\in\mathbb{N}}$ contains $(L_i)_{i\in\mathbb{N}}$ as a subsequence. This means that there is a strictly monotone increasing sequence $(n_i)_{i\in\mathbb{N}}$ of natural number such that $K_{n_i}\cong L_i$ for all $i\in\mathbb{N}$. In order to show that $K$ is an ultraknot, we have to fill the gaps between the $n_i$s, that is, we have to make sure that $K_n$ is a knot, not some non-simple loop, even when $n\neq n_i$ for all $i\in\mathbb{N}$.

We want to emphasise that we require that $f^{(n)}$ is injective. We are not satisfied with the image of $f^{(n)}$ being a knot. These two notions are not equivalent, since $f^{(n)}$ could parametrise a loop that traverses its image several times.

\begin{definition}
We say that a parametric curve $f:S^1\to\mathbb{R}^n$ is 1-covered if there is a point $y\in f(S^1)$ such that $f^{-1}(y)$ is a unique point on $S^1$. We say that $f$ is $N$-covered for some $N\in\mathbb{N}_{>1}$ if $f$ is the composition of an $N$-fold covering map of $S^1$ and a 1-covered parametric curve.
\end{definition}

We now study the self-intersection points of a real-analytic planar parametric curve $(x(t),y(t))$. Such intersection points correspond to values $t_*\neq s_*$ with $(x(t_*),y(t_*))=(x(s_*),y(s_*))$. Consider the curve $(\cos(nt),\sin(nt))$. It has infinitely many self-intersection points, but only because it is $n$-covered. Its image is a smooth manifold without any self-intersections, meaning there are no $t_*\neq s_*$ in the fundamental domain $[0,2\pi/n)$ with $(x(t_*),y(t_*))=(x(s_*),y(s_*))$.

\begin{lemma}\label{Nself}
Let $f(t)=(x(t),y(t)):S^1\to\mathbb{R}^2$ be a real-analytic map. Assume that $f$ is regular, i.e., there are no values of $t$ with $f'(t)=(0,0)$. Then the image of $f$ is a loop with at most finitely many self-intersections.
\end{lemma}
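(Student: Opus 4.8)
The plan is to exploit real-analyticity to force the self-intersection set to be a finite union of points rather than a one-parameter family. Consider the function $h:S^1\times S^1\to\mathbb{R}^2$ given by $h(t,s)=f(t)-f(s)$. Its zero set $Z=h^{-1}(0)$ always contains the diagonal $\Delta=\{(t,t)\}$, and the self-intersections of the loop correspond to points of $Z\setminus\Delta$ (up to the symmetry $(t,s)\leftrightarrow(s,t)$). Since $f$ is real-analytic, $h$ is real-analytic, so $Z$ is a real-analytic subset of the compact $2$-dimensional manifold $S^1\times S^1$. By the structure theory of real-analytic sets (e.g.\ {\L}ojasiewicz), $Z$ is a locally finite union of real-analytic submanifolds, and in particular each connected component of $Z$ is either a point or a $1$-dimensional arc/circle. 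The strategy is to show that no component other than $\Delta$ can be $1$-dimensional; then $Z\setminus\Delta$ is a compact real-analytic set of dimension $0$, hence finite, which gives the conclusion.

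To rule out a $1$-dimensional component $C\neq\Delta$, first note that by analyticity $C$ is either disjoint from $\Delta$ or meets $\Delta$ in a discrete set; in either case, after removing finitely many points, I may assume $C$ is an analytic arc of points $(t,s)$ with $t\neq s$ along which $f(t)=f(s)$. Parametrise such an arc analytically as $(t(\tau),s(\tau))$; differentiating $f(t(\tau))=f(s(\tau))$ gives $f'(t(\tau))\,t'(\tau)=f'(s(\tau))\,s'(\tau)$. Since $f$ is regular, $f'(t(\tau))$ and $f'(s(\tau))$ are nonzero vectors, so they must be parallel and both $t'$ and $s'$ are nonvanishing (away from the isolated points where the arc could be singular). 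This says that along $C$ the two branches of the image coincide not just as sets of points but tangentially: $f$ maps the two disjoint analytic arcs $\{t(\tau)\}$ and $\{s(\tau)\}$ onto the \emph{same} analytic arc in $\mathbb{R}^2$, traversing it as reparametrisations of one another. One then propagates this coincidence by analytic continuation: the identity $f(t(\tau))=f(s(\tau))$ and its derivatives, being real-analytic identities holding on an interval, extend as far as the branches do, and a maximality/connectedness argument on $S^1$ shows the overlap must close up, forcing $f$ to be $N$-covered for some $N>1$. But an $N$-covered regular real-analytic loop still has the property that its fundamental domain contains no self-intersections — wait, more carefully: the hypothesis does not exclude $N$-covered curves, so in that case the image is literally a smooth immersed loop and the statement (image is a loop with finitely many self-intersections) is trivially true. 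So the only remaining case is a genuinely $1$-covered $f$, and for such an $f$ the above continuation argument yields a contradiction, leaving only $0$-dimensional components in $Z\setminus\Delta$.

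Finally, compactness of $S^1\times S^1$ together with local finiteness of the real-analytic set $Z$ shows $Z\setminus\Delta$ has only finitely many points, hence the loop $f(S^1)$ has only finitely many self-intersections, each coming from a finite fibre. This completes the argument.

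I expect the main obstacle to be the middle step: cleanly handling $1$-dimensional components of the coincidence set and showing they can only occur for $N$-covered curves. The delicate points are (i) that the analytic arc $C$ might be singular or might accumulate on $\Delta$, which requires invoking the local structure of real-analytic sets rather than naive implicit-function arguments, and (ii) the analytic-continuation/monodromy argument that upgrades a local tangential overlap of two branches into a global covering structure on $S^1$. If one prefers to avoid the covering discussion entirely, an alternative is to argue by contradiction directly: a $1$-parameter family of self-intersections would, by the tangency observation above, force a positive-length sub-arc of the image to be doubly covered, and then a connectedness argument shows the \emph{whole} image is multiply covered, contradicting $1$-coveredness; either way the regularity hypothesis $f'\neq 0$ is exactly what is needed to pass from "same point" to "same tangent line" and thereby to "same arc".
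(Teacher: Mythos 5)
Your framing is genuinely different from the paper's: you decompose the coincidence set $Z=\{(t,s):f(t)=f(s)\}\subset S^1\times S^1$ using the structure theory of real-analytic sets, whereas the paper argues directly from an accumulation point of self-intersections, builds local complex-analytic inverses of $f$ near the two accumulation parameters, applies the identity theorem to $\mathrm{Im}(f_t^{-1}\circ f_s)$ to get a local overlap of arcs, and then uses the (unique, real-analytic) arc-length parametrisation to promote that overlap to a global period, i.e.\ to an $N$-covering. Your core mechanism is the same --- analyticity forces a positive-dimensional coincidence locus to mean the curve retraces itself along an arc, and continuation turns that into a covering --- but the Łojasiewicz stratification is a heavier input than the paper needs, and your continuation step (``a maximality/connectedness argument on $S^1$ shows the overlap must close up'') is exactly the delicate point the paper resolves concretely via arc-length reparametrisation; as written you only assert it.

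The genuine gap is your treatment of the $N$-covered case. You write that if $f$ is $N$-covered ``the image is literally a smooth immersed loop and the statement \ldots is trivially true.'' This is circular: the original $1$-covered, regular, real-analytic $f$ is also a smooth immersed loop, and the entire content of the lemma is that such a loop cannot have infinitely many self-intersections. If $f=g\circ(\text{$N$-fold cover})$, the image of $f$ equals the image of $g$, and $g$ is again a regular real-analytic loop whose image may a priori have infinitely many genuine self-intersections; nothing about being a covering rules that out. The correct conclusion of your dichotomy is not ``done,'' but ``rerun the whole argument on the underlying $1$-covered curve $g$,'' for which the $1$-dimensional-component analysis now yields an actual contradiction --- this second pass is precisely how the paper closes its proof, and it is missing from yours. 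A secondary, smaller point: you should also rule out components of $Z$ accumulating on or merging with the diagonal $\Delta$ using regularity (local injectivity near the diagonal gives a neighbourhood of $\Delta$ meeting $Z$ only in $\Delta$), which is what lets you conclude that finitely many isolated off-diagonal points is the only surviving possibility; you gesture at this but do not pin it down.
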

\begin{proof}
Since $f$ is $2\pi$-periodic, its image is a loop. Now assume that there are infinitely many self-intersections. Then there are sequences of values $(t_n)_{n\in\mathbb{N}}$ and $(s_n)_{n\in\mathbb{N}}$, converging to $t_*\in[0,2\pi]$ and $s_*\in[0,2\pi]$, respectively, and such that $f(t_n)=f(s_n)$ for all $n$.

Since $f$ is regular, there is a neighbourhood $U$ of $t_*$ such that $f$ is injective on $U$. In particular, $U$ does not contain any $s_n$ with $n$ such that $t_n\in U$. Therefore $s_*\notin U$ and in particular $s_*\neq t_*$. Thus there is an $\varepsilon>0$ such that $[t_*-\varepsilon,t_*+\varepsilon]\subset\mathbb{R}$ and $[s_*-\varepsilon,s_*+\varepsilon]\subset\mathbb{R}$ are disjoint and $f$ is invertible on each of the two intervals. Note that we consider these intervals as subsets of $\mathbb{R}$ (as opposed to $[0,2\pi]$) to deal for example with the case where $t_*=0$.

Since $f(t)$ is real-analytic, it has a complex analytic extension on some open neighbourhood $V_1$ of $[t_*-\varepsilon,t_*+\varepsilon]$ in $\mathbb{C}$. Similarly, is has a complex analytic extension on some open neighbourhood of $V_2$ of $[s_*-\varepsilon,s_*+\varepsilon]$ in $\mathbb{C}$. By continuity, the extension of $f$ remains regular, that is, invertible, on these neighbourhoods. Since $f$ is analytic, the local inverse is also analytic.

We write $f_t$ for the restriction of $f$ to $V_1$ and $f_s$ for the restriction of $f$ to $V_2$. We define $W:=f(V_1)\cap f(V_2)$. Consider now the map $f_t^{-1}\circ f_s:V_2\to V_1$. It is an analytic map such that intersection points between $f([t_*-\varepsilon,t_*+\varepsilon])$ and $f([s_*-\varepsilon,s_*+\varepsilon])$ are precisely $[t_*-\varepsilon,t_*+\varepsilon]\cap f_t^{-1}(f_s([s_*-\varepsilon,s_*+\varepsilon]))$. In particular, we have $\text{Im}(f_t^{-1}\circ f_s)(s_n)=0$ for all $s_n\in V_2$ and thus $\text{Im}(f_t^{-1}\circ f_s)(s_*)=0$. Since $\text{Im}(f_t^{-1}\circ f_s)$ is real analytic on $[s_*-\varepsilon,s_*+\varepsilon]$, this implies that $\text{Im}(f_t^{-1}\circ f_s)$ is constant 0 on $[s_*-\varepsilon,s_*+\varepsilon]$.

Now consider $\hat{f}(t):S^1\cong\mathbb{R}/L\to\mathbb{R}^2$, the arc-length-parametrisation of $f(t)$, so that $|\hat{x}'(t)|^2+|\hat{y}'(t)|^2=1$ for all $t$, where $\hat{f}(t)=(\hat{x}(t),\hat{y}(t))$. Here $L$ is the length of the curve. This is well-defined, since $f$ is non-regular. The map $\hat{f}$ is unique and real-analytic. Let $\hat{t}_*$ and $\hat{s}_*$ be the images of $t_*$ and $s_*$ under the reparametrisation map $[0,2\pi]\to[0,L]$. We then have that $\hat{f}(t)$ and $\hat{f}(t-\hat{s}_*+\hat{t}_*)$ are both real-analytic and agree on the image of $[s_*-\varepsilon,s_*+\varepsilon]$ under the reparametrisation map. But then they must agree on their entire domain $[0,L]$.

Thus the map $\hat{f}$ is $(\hat{t}_*-\hat{s}_*)$-periodic. It is therefore the composition of the map $t\mapsto k t$, for some natural number $k>1$ and a map $g:S^1\to\mathbb{R}^2$ that has at least one point in its image with unique preimage point. Applying the same arguments as above to $g$, shows by contradiction that the image of $g$ has only finitely many self-intersection points. By definition the image of $g$ is equal to the image of $f$, which proves the lemma.
\end{proof}

\begin{lemma}\label{curvature}
Let $f(t):S^1\to\mathbb{R}^2$, $f(t)=(x(t),y(t))$ be a real-analytic, non-constant map. Then there is a natural number $N$ such that for every $\delta>0$ there are trigonometric polynomials $(p_1(t),p_2(t))$ of degree $N$, all of whose coefficients have absolute value less than $\delta$, such that $(x(t)+p_1(t),y(t)+p_2(t))$ is 1-covered and has only finitely many self-intersections.
\end{lemma}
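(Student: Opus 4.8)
The plan is to produce the required perturbation in two stages: first make the curve 1-covered, then kill the infinitely many self-intersections (if any survive). By Lemma~\ref{Nself}, the only way a regular real-analytic planar curve can fail to have finitely many self-intersections is by being $k$-covered for some $k>1$; so the content is genuinely about the covering number and then a finite list of crossing points. Since $f$ is non-constant and real-analytic, after possibly shrinking to the fundamental domain it is $1$-covered by a map $g$ with $g$ having only finitely many self-intersections; but a priori $f$ itself may be $k$-covered. The key observation is that if $f=g\circ(t\mapsto kt)$ with $k>1$, then the Fourier spectrum of $f$ is supported on multiples of $k$. Adding to $x$ and $y$ a small trigonometric term of degree $N$ with $N$ \emph{coprime} to $k$ (or more robustly, adding $\delta\cos(t)$ and $\delta\sin(t)$, i.e.\ a term of degree $1$, which no $k>1$ divides) immediately destroys the $k$-fold periodicity: the perturbed curve has a frequency-$1$ component and therefore cannot be $k$-covered for any $k>1$. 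So first I would fix $N=1$ (this is the $N$ in the statement) and add $p_1(t)=\delta\cos(t)$, $p_2(t)=\delta\sin(t)$; the resulting curve is automatically $1$-covered.

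Second, I must arrange that the perturbed curve is regular, so that Lemma~\ref{Nself} applies and gives finitely many self-intersections. The hypothesis of Lemma~\ref{curvature} does not assume $f$ itself is regular, only non-constant; but a generic small perturbation is regular. More precisely, $f(t)=(x(t),y(t))$ real-analytic and non-constant means $f'$ vanishes on at most a finite set $Z\subset S^1$ (a real-analytic function that is not identically zero has isolated zeros). I would choose $\delta$ small and, if necessary, perturb within the degree-$N$ family so that at each $t\in Z$ the derivative of the added trigonometric polynomial is nonzero; since $Z$ is finite and the space of degree-$N$ trigonometric polynomials with small coefficients is a nonempty open set cut out by finitely many nonvanishing conditions, such $(p_1,p_2)$ exist with all coefficients bounded by $\delta$. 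Alternatively one can simply note that a generic small perturbation of a real-analytic curve is an immersion; the point is only that the bad set of perturbations is a proper real-analytic subvariety of the (finite-dimensional) coefficient space. With regularity secured, Lemma~\ref{Nself} guarantees the perturbed curve is a loop with only finitely many self-intersections, and the previous paragraph guarantees it is $1$-covered.

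The step I expect to be the main obstacle is reconciling these two requirements simultaneously while keeping the degree fixed at a single $N$ independent of $\delta$, as the statement demands. The natural fix is: let $Z=\{t_1,\dots,t_m\}$ be the (finite) zero set of $f'$, and pick $N$ large enough — or just $N=1$ — that the system of linear conditions ``$p'(t_j)\neq f'(t_j)^{\perp}$-obstructing'' can be met; concretely, for each $t_j$ at which $f'(t_j)=0$ we need $(p_1'(t_j),p_2'(t_j))\neq(0,0)$, which for $N=1$ reads $(-\delta_1\sin t_j+\delta_2\cos t_j,\ \delta_3 \cdots)$ and is satisfied for all but a measure-zero set of small coefficient choices. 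So $N$ can be taken to be $1$ and, having fixed $N$, for every $\delta>0$ one selects coefficients of absolute value $<\delta$ avoiding the finitely many bad hyperplanes (from regularity at the points of $Z$) and also avoiding the measure-zero set where new non-transverse or higher-multiplicity self-intersections could in principle create problems — though by Lemma~\ref{Nself} transversality is not needed, only regularity. I would therefore write the proof as: (1) $N:=1$; (2) for given $\delta$, the set of $(p_1,p_2)$ with coefficients $<\delta$ making the perturbed curve regular is open and dense; (3) any such perturbed curve has a frequency-$1$ Fourier component, hence is not $k$-covered for $k\ge 2$, hence is $1$-covered; (4) being regular and real-analytic, Lemma~\ref{Nself} gives only finitely many self-intersections. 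The mild subtlety worth a sentence is that ``$1$-covered'' in the paper's Definition means \emph{some} point of the image has a unique preimage, and I should point out that a non-$k$-covered regular real-analytic loop does have such a point — indeed by Lemma~\ref{Nself} its self-intersection set is finite, so all but finitely many image points are $1$-preimage points.
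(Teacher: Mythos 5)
Your reduction to ``regular plus Lemma~\ref{Nself}'' is sound, and your transversality argument for making $f'+p'$ nowhere vanishing is a legitimate alternative to the paper's device of perturbing only the $y$-coordinate at the finitely many zeros of $x'$. The gap is in your step (3), which is the heart of the lemma. You claim that a curve with a nonzero frequency-$1$ Fourier component cannot be $k$-covered for any $k\geq 2$, on the grounds that $k$-covered means $f=g\circ(t\mapsto kt)$ and hence forces the spectrum onto multiples of $k$. But the definition of $N$-covered allows an \emph{arbitrary} $N$-fold covering map of $S^1$, not just the standard one, and the covering map produced in the proof of Lemma~\ref{Nself} is $N$ times the arc-length reparametrisation, which is in general not $t\mapsto Nt$. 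A non-affine covering map scrambles the spectrum: for instance $t\mapsto \rme^{\rmi(2t+\epsilon\sin t)}$ is the unit circle traversed twice (regular, real-analytic, $2$-covered, every image point has exactly two preimages), yet by the Jacobi--Anger expansion $\rme^{\rmi(2t+\epsilon\sin t)}=\sum_{m}J_m(\epsilon)\rme^{\rmi(m+2)t}$ its coordinate functions have nonzero frequency-$1$ coefficients. So the presence of a frequency-$1$ term in the perturbed curve does not rule out its being $k$-covered, and your proof of $1$-coveredness collapses at exactly the point you flagged as the key observation.

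What is actually needed, and what the paper does, is a geometric argument: after arranging regularity, pick $t_*$ where the curvature of the perturbed curve $F$ is nonzero, so that near $F(t_*)$ the curve stays on one side of its tangent line, and add the degree-$1$ term $-b\cos(t-t_*)\mathrm{N}(t_*)$, a push in the normal direction whose magnitude is uniquely maximal at $t=t_*$. For small $b$ this displaces the sheet through $F(t_*)$ off the other $N-1$ sheets and produces an image point with a unique preimage, i.e. a $1$-covered curve. Your remarks that the degree $N$ can be chosen independently of $\delta$ and that finiteness of the self-intersections then follows from Lemma~\ref{Nself} are correct, but without a replacement for step (3) the proof is incomplete.
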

\begin{proof}
Since $f$ is real-analytic and non-constant, at least one of the coordinate functions, say $x(t)$, has only finitely many critical points, say $N'$ many. Via trigonometric interpolation we may then find a trigonometric polynomial $p$ of degree $N:=\left\lfloor\tfrac{N'}{2}\right\rfloor$ such that for all $a>0$ we have $y'(t)+a p'(t)\neq 0$ for all $t$ with $x'(t)=0$. In particular, $F(t):=(x(t),y(t)+a p(t))$ is a regular curve for all  $a>0$. The interpolation condition is simply that $p'(t)=0$ if $x'(t)=0$, but $y'(t)\neq 0$, and $p'(t)$ is non-zero if $x(t)=0$ and $y'(t)=0$.

Lemma~\ref{Nself} implies that the image of $F$ has only finitely many self-intersections. From the proof of Lemma~\ref{Nself} we know that if $F$ is not 1-covered, then it is $N$-covered for some $N>1$. If $F$ is 1-covered, we are done. Assume that $F$ is $N$-covered for some $N>1$. Since the image of $F$ is a regular curve, forming a closed loop, there must be some $t_*\in[0,2\pi]$, where its curvature $\kappa(t)=\tfrac{|(y+ap)'(t)x''(t)-(y+ap)''(t)x'(t)|}{\sqrt{x'(t)^2+(y+ap)'(t)^2}^3}$ is non-zero.

Since $\kappa(t_*)$ is non-zero, there is a neighbourhood $U$ of $t_*$ and a neighbourhood $V$ of $F(t_*)$ in $\mathbb{R}^2$ such that the tangent line $\ell$ through $F(t_*)$ divides $V$ in two components and $F(U\backslash\{t_*\})$ is contained in one of them. Let $\mathrm{N}(t_*)$ be the normal vector of the curve $F(t)$ at $t=t_*$. Now consider the tuple of trigonometric polynomials $-\cos(t-t_*)\mathrm{N}(t_*)$. 

We claim that $F-b\cos(t-t_*)\mathrm{N}(t_*)$ has the property that $F(t_*)-b\mathrm{N}(t_*)$ has a unique preimage point if $b>0$ is sufficiently small. Note that adding $-b\cos(t-t_*)\mathrm{N}(t_*)$ corresponds to a parallel translation of the curve, where the length of the translation is determined by $\cos(t-t_*)$. Since $\cos(t-t_*)$ attains its unique maximum at $t=t_*$, no other point in $F^{-1}(V)$ can map to $F(t_*)-b\mathrm{N}(t_*)$. Note that $F^{-1}(V)$ does not only consist of $U$, but also of $N-1$ other intervals with the same image. By choosing $b$ small enough and potentially shrinking $V$, we can guarantee that $F(t_*)-b\mathrm{N}(t_*)\in V$ and $F([0,2\pi]\backslash F^{-1}(V))\cap V=\emptyset$. Thus $F-b\cos(t-t_*)\mathrm{N}(t_*)$ is 1-covered and has only finitely many self-intersections.

All added terms are trigonometric polynomials of degree at most $N$ and by choosing $a$ and $b$ sufficiently small, we can achieve the bound on the coefficients.
\end{proof}

Note that Lemma~\ref{curvature} goes beyond typical density arguments of trigonometric polynomials. In particular, the degree $N$ only depends on the curve $f$ and is independent of $\delta$.

\begin{lemma}\label{lem:zcoord}
Let $f(t)=(x(t),y(t),z(t)):S^1\to\mathbb{R}^3$ be a smooth map, such that $(x(t),y(t)):S^1\to\mathbb{R}^2$ is 1-covered and only has finitely many self-intersections. Then there is a trigonometric polynomial $p:S^1\to\mathbb{R}$ such that $(x(t),y(t),z(t)+cp(t))$ parametrises a knot for all sufficiently small $c>0$.
\end{lemma}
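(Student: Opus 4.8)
The plan is to choose the trigonometric polynomial $p$ so that at every self-intersection point of the planar diagram $(x(t),y(t))$ the finitely many arcs passing through that point get assigned pairwise distinct $z$-values; for $c>0$ small the vertical displacement $z+cp$ then separates these arcs and the resulting space curve becomes injective, while no new self-intersection can appear since two parameters can be identified only where $(x,y)$ already agrees. The only step requiring real care is the bookkeeping at crossings of multiplicity $\geq 3$, where more than two arcs meet at a single point of the diagram; this is handled by working with \emph{all} crossing pairs rather than one pair per crossing point.

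First I would record the relevant finite data. Since $(x(t),y(t))$ is $1$-covered and has only finitely many self-intersections, the set of pairs $(t,s)$ with $t\neq s$ in $S^1$ and $(x(t),y(t))=(x(s),y(s))$ is finite; choosing representatives in $[0,2\pi)$ I list them as $(t_j,s_j)$ with $t_j<s_j$, $j=1,\dots,m$ (a crossing point of multiplicity $r$ contributing $\binom{r}{2}$ of them). By construction, if $t\neq s$ and $\{t,s\}$ is not one of these pairs, then $(x(t),y(t))\neq(x(s),y(s))$.

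Next I would exhibit a degree-one trigonometric polynomial $p$ with $p(t_j)\neq p(s_j)$ for all $j$. Taking $p(t)=\cos(t-\varphi)=\cos\varphi\,\cos t+\sin\varphi\,\sin t$, the equality $\cos(t_j-\varphi)=\cos(s_j-\varphi)$ forces $t_j-\varphi\equiv\pm(s_j-\varphi)\pmod{2\pi}$; the $+$ sign gives the excluded relation $t_j\equiv s_j$, while the $-$ sign gives $2\varphi\equiv t_j+s_j\pmod{2\pi}$, i.e. two values of $\varphi$. Avoiding the resulting at most $2m$ bad values of $\varphi$ produces the desired $p$; alternatively, trigonometric interpolation yields a $p$ that is injective on the finite set $\{t_1,s_1,\dots,t_m,s_m\}$, which is more than sufficient.

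Finally I would verify that this $p$ works for all sufficiently small $c>0$. If $z(t_j)=z(s_j)$, then $(z+cp)(t_j)-(z+cp)(s_j)=c\bigl(p(t_j)-p(s_j)\bigr)\neq 0$ for every $c>0$; if $z(t_j)\neq z(s_j)$, then $(z(t_j)-z(s_j))+c\,(p(t_j)-p(s_j))$ is an affine function of $c$, nonzero at $c=0$, hence vanishing for at most one value $c_j^{*}$. With $c_0:=\min\{c_j^{*}:c_j^{*}>0\}>0$ (and $c_0:=\infty$ if there are no positive $c_j^{*}$), for every $0<c<c_0$ the two heights over any pair $(t_j,s_j)$ are distinct, so by the previous step all arcs through a given crossing point receive distinct heights; combined with injectivity of $(x,y)$ away from the $t_j,s_j$, the map $(x(t),y(t),z(t)+cp(t))$ is injective for all such $c$. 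As its $(x,y)$-projection is regular (e.g. when $(x,y)$ is a curve produced by Lemma~\ref{curvature}), the space curve is regular as well, so it parametrises a knot.
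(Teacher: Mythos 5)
Your proposal is correct and follows essentially the same route as the paper: both separate the finitely many parameter values lying over each self-intersection point of the planar projection by a trigonometric polynomial obtained via interpolation (your explicit $\cos(t-\varphi)$ is a concrete instance of this), and then take $c>0$ small. The only cosmetic difference is that the paper sets $p=0$ at crossings where $z$ already separates the strands, while you separate all pairs and handle the resulting sign issue with the affine-in-$c$ argument; both resolutions are valid.
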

\begin{proof}
If $f$ parametrises a knot, we can simply take $p=0$.
Suppose that $f$ does not parametrise a knot. Since $(x(t),y(t))$ is 1-covered, all self-intersections of $f$ are self-intersections of its image and not the result of some pre-composed covering map. Since $(x(t),y(t))$ only has finitely many self-intersections, so does $f(t)$. Note that we do not know if $(x(t),y(t))$ parametrises a regular knot diagram. The ``crossings'' could be tangential or could involve more than two strands.

We find the desired trigonometric polynomial $p$ via trigonometric interpolation. Let $c_i=(c_{i,1},c_{i,2})$, $i=1,2,\ldots,m$, denote the self-intersections of $(x(t),y(t))$ and let $t_{i,j}$, $j=1,2,\ldots,m_i$, be the values of $t$ in $[0,2\pi)$ so that $(x(t_{i,j}),y(t_{i,j}))=c_i$ for all $i\in\{1,2,\ldots,m\}$, $j\in\{1,2,\ldots,m_i\}$. If all $z(t_{i,j})$s with a fixed $i$ are distinct we can set $p(t_{i,j})=0$ for all $j\in\{1,2,\ldots,m_i\}$. If there is some $i$ and a subset $J\subset\{1,2,\ldots,m_i\}$ with $|J|>1$ so that $z(t_{i,j})=z(t_{i,j'})$ for all $j,j'\in J$, then we demand that $p(t_{i,j})\neq p(t_{i,j'})$ for all $j,j'\in J$. Since the set of interpolation points is finite, such a trigonometric polynomial $p$ always exists and by construction we have $z(t_{i,j})+cp(t_{i,j})\neq z(t_{u,j'})+cp(t_{i,j'})$ for all $i\in\{1,2,\ldots,m\}$, $j,j'\in\{1,2,\ldots,m_i\}$, as long as $c$ is sufficiently small. By construction, the resulting curve has no self-intersections and thus is a knot.
\end{proof}

\begin{lemma}
Let $f:S^1\to\mathbb{R}^3$ be as in Proposition~\ref{prop}. Then there is a smooth function $g:S^1\to\mathbb{R}^3$, whose coordinate functions are smooth infinite Fourier series, such that $f+g$ parametrises $K$, its loop sequence contains only knots and $(f+g)^{(n_i)}$ parametrises $(L_i)_{i\in\mathbb{N}}$ for all $i\in\mathbb{N}$.
\end{lemma}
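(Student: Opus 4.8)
The plan is to perturb $f$ one block at a time, leaving the block structure of Proposition~\ref{prop} intact. Recall that $f=\sum_{i\ge 0}\widetilde{f_i}$, where $\widetilde{f_0}=f_0$ parametrises $K$ and, for $i\ge 1$, $\widetilde{f_i}$ is a rescaled ``pre-differentiated'' copy of a trigonometric parametrisation $f_i$ of $L_i$ supported on the frequency block $[N_i,M_i]$; in particular $\widetilde{f_i}^{(n)}=(N_i-1)^{n_i}f_i^{(n-n_i)}$ for every $n\in\mathbb{Z}$, the antiderivatives (for $n<n_i$) being taken with zero mean, which is legitimate since $f_i$ has lowest order $\ge N_i\ge 1$. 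First I would observe that \eqref{eq:ninj}, together with the extra slack of Remark~\ref{rem2}, gives more than Proposition~\ref{prop} needs: after a mild strengthening of the estimates it guarantees that at every derivative order $n$ a single block $i(n)$ dominates $f^{(n)}$ in the sense of Lemma~\ref{lem:epsilon}, with $i(n_i)=i$ and $i(n)$ non-decreasing in $n$, so that $f^{(n)}$ equals, up to a $C^1$-negligible error, the rescaled loop $(N_{i(n)}-1)^{n_{i(n)}}f_{i(n)}^{(n-n_{i(n)})}$. Hence $f^{(n)}$ fails to be a knot only when some derivative or antiderivative of one of the \emph{fixed} parametrisations $f_i$ fails to be a knot, and for each $i$ only the finitely many orders in the range on which block $i$ is dominant are relevant.

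So for each $i\ge 0$ I would fix a small trigonometric-polynomial perturbation $\phi_i$ of $f_i$ as follows. Processing the relevant orders $l$ one at a time, apply Lemma~\ref{curvature} to the (real-analytic, non-constant) planar projection of $(f_i+\phi_i^{<l})^{(l)}$ to make it $1$-covered with finitely many self-intersections, then Lemma~\ref{lem:zcoord} to lift the third coordinate to a knot, and add the resulting trigonometric-polynomial correction to $\phi_i$. Because there are only finitely many orders $l$ per block and each lemma allows its correction to have arbitrarily small Fourier coefficients, $\phi_i$ (after discarding its constant term, which only translates the loop) can be kept so small that $(f_i+\phi_i)^{(l)}$ is a knot for every relevant $l$ and in particular $f_i+\phi_i$ still parametrises $L_i$ (respectively $K$ when $i=0$). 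The degree $D_i$ of $\phi_i$ is a finite quantity determined by the block data and independent of how small $\phi_i$ is taken.

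Next I would set $g:=\sum_{i\ge 0}\big(\widehat{\widetilde{f_i}}-\widetilde{f_i}\big)$, where $\widehat{\widetilde{f_i}}$ applies the same rescaling and pre-differentiation to $f_i+\phi_i$; then $f+g=\sum_i\widehat{\widetilde{f_i}}$ and $\widehat{\widetilde{f_i}}^{(n)}=(N_i-1)^{n_i}(f_i+\phi_i)^{(n-n_i)}$. For every $n$, $(f+g)^{(n)}$ then differs by a $C^1$-negligible error from $(N_{i(n)}-1)^{n_{i(n)}}(f_{i(n)}+\phi_{i(n)})^{(n-n_{i(n)})}$, a knot by the previous step, and at $n=n_i$ it is exactly $(N_i-1)^{n_i}(f_i+\phi_i)$, a parametrisation of $L_i$. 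It remains to pin down the (still free) sizes of the $\phi_i$ so that these errors are genuinely negligible and $g$ is smooth; this imposes three families of upper bounds on the Fourier coefficients $(\phi_i)_k$, $|k|\le D_i$: (a) the bounds already used, making $f_i+\phi_i$ realise the right knot type; (b) that $\widehat{\widetilde{f_i}}-\widetilde{f_i}$ stay within the error budget of every other block at its own dominant orders — an inequality of exactly the shape \eqref{eq:ninj}, which (because for $j>i$ the right-hand side tends to $\infty$) bounds $|(\phi_i)_k|$ below a fixed positive number; and (c) that $g$ have Fourier coefficient at frequency $k$ bounded by $2^{-|k|-1}$, which bounds $|(\phi_i)_k|$ by $2^{-|k|-i-2}\big(\tfrac{N_i-1}{|k|}\big)^{-n_i}$, again a fixed positive number. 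Choosing each $(\phi_i)_k$ below all three thresholds — which the lemmas permit — makes $f+g$ a parametrisation of $K$ by Lemma~\ref{lem:epsilon} and Remark~\ref{remark}, makes $g$ a triple of smooth (in fact real-analytic) Fourier series, and makes the loop sequence of $f+g$ consist of knots with $L_i$ at position $n_i$.

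The hard part is the global coupling: a single function $g$ must repair infinitely many derivative orders at once, and the high-order derivatives of any fixed trigonometric-polynomial correction grow without bound, so a correction introduced to fix a low order would eventually destroy the knots $L_i$ sitting at the arbitrarily large positions $n_i$. What defeats this is precisely the scale separation of Proposition~\ref{prop}: because block $i$ occupies its own high frequency window and is pre-differentiated so as to dominate $f^{(n)}$ only on the orders near $n_i$, a perturbation of block $i$ is automatically $C^1$-negligible at every order dominated by a different block, and in particular at every $n_j$ with $j\ne i$, so the repairs never interfere. The remaining points are quantitative and (tediously) routine — strengthening the estimates of Proposition~\ref{prop} so that the dominant block is well defined at every order, controlling the degrees $D_i$, and checking the three thresholds above are simultaneously satisfiable, which holds because each is a fixed positive number once the block data are frozen.
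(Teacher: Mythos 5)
There is a genuine gap, and it sits exactly at the point your proposal waves through: the claim that ``after a mild strengthening of the estimates'' every derivative order $n$ is dominated by a single block $i(n)$ in the sense of Lemma~\ref{lem:epsilon}. Inequality~\eqref{eq:ninj} is only arranged at the special orders $n_j$, and it cannot be arranged at all orders. Between $n_i$ and $n_{i+1}$ the ratio of the size of block $i{+}1$ to the size of block $i$ in $f^{(n)}$ multiplies, per unit increase of $n$, by a factor between $N_{i+1}/M_i$ and $M_{i+1}/N_i$, i.e.\ by a bounded amount; but for block $i$ to dominate, block $i{+}1$ (supported at frequencies $\geq N_{i+1}$) must lie below $\varepsilon/2^{|k|}\leq \varepsilon/2^{N_{i+1}}$ times the relevant scale, while for block $i{+}1$ to dominate the reverse inequality must hold with the roles exchanged. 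The ratio therefore has to traverse a multiplicative window of width at least of order $2^{N_{i+1}}$ (further widened by the fact that the $\varepsilon$-values of the block curves can be much smaller than their diameters), and in general it cannot jump across this window in a single derivative step. So there are orders $n$ at which $f^{(n)}$ is an honest superposition of two blocks, controlled by neither; this is precisely why the paper remarks, right after stating Proposition~\ref{prop}, that $K_n$ ``could be a non-simple loop when $n\neq n_i$''. Your block-by-block repair --- perturbing each fixed $f_i$ so that its finitely many relevant derivatives and antiderivatives are knots --- has nothing to say about these crossover orders, because at them the loop is not (close to) a derivative or antiderivative of any single $f_i$, and choosing larger gaps $n_{i+1}-n_i$ only relocates the crossover without removing it.

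The paper's proof avoids this by perturbing at \emph{every} order $n$ separately: at stage $n$ one applies Lemma~\ref{curvature} and Lemma~\ref{lem:zcoord} to the actual curve $(f+\sum_{j\leq n-1}\widetilde p_j)^{(n)}$, whatever mixture of blocks it happens to be, obtains a trigonometric correction $p_n$ of some finite degree, and antidifferentiates it $n$ times to get $\widetilde p_n$; the inductive smallness bookkeeping (which your points (a)--(c) essentially reproduce) then guarantees that $\widetilde p_n$ does not disturb $K$, the $L_i$, or the previously repaired orders. Your interference analysis and the use of the $\delta$-independent degree bound in Lemma~\ref{curvature} are sound and match the paper; what is missing is a mechanism for making $K_n$ simple at the orders where no single block dominates. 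To fix the proposal you would have to replace the per-block perturbations $\phi_i$ by per-order perturbations applied to the full mixed curve, which is the paper's argument.
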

\begin{proof}
Suppose that $f^{(1)}$ does not parametrise a knot. Since $f^{(n_i)}$ parametrises a knot for every $i$, there are at least two coordinate functions, say $(x'(t),y'(t))$, of $f^{(1)}$ that are not constant. By Lemma~\ref{curvature} there are trigonometric polynomials $p_{1,1}$ and $p_{1,2}$ such that $(x'(t)+p_{1,1}(t),y'(t)+p_{1,2}(t))$ is 1-covered and has only finitely many self-intersections. Then by Lemma~\ref{lem:zcoord} there is a trigonometric polynomial $p_{1,3}$ such that $(x'(t)+p_{1,1}(t),y'(t)+p_{1,2}(t),z'(t)+p_{1,3}(t))$ parametrises a knot.

We can write the trigonometric polynomials are written as finite Fourier series as $p_{1,j}(t)=\sum_{k=-\infty}^{\infty}a_{1,j,k}\rme^{\rmi kt}$, with all but finitely many $a_{1,j,k}$ equal to zero. Note that we can assume that $a_{1,j,0}=0$ for all $j=1,2,3$, since changing the constant term corresponds to an overall translation of the parametrised knot. Then we define
\begin{equation}
\widetilde{p}_{1,j}(t):=\sum_{k=-\infty}^\infty \frac{a_{1,j,k}}{\rmi k}\rme^{\rmi kt}
\end{equation}
and $\widetilde{p}_1(t)=(\widetilde{p}_{1,1}(t),\widetilde{p}_{1,2}(t),\widetilde{p}_{1,3}(t))$. It follows that 
\begin{equation}
(f(t)+\widetilde{p}_1(t))'=f'(t)+(p_{1,1}(t),p_{1,2}(t),p_{1,3}(t))
\end{equation} 
and in particular, it parametrises a knot, say $K_1$.

We show that $\widetilde{p}_1$ can be chosen such that $f+\widetilde{p_1}$ still parametrises $K$ and $(f+\widetilde{p})^{(n_i)}$ still parametrises $L_i$ for all $i$. We know that adding Fourier series to parametrisations of knots does not change knot types if the added coefficients are sufficiently small. The construction from the previous section was based on trigonometric parametrisations of $K$ and each $L_i$. Recall that we associate to every such parametrisation an ``$\varepsilon$-value'' $\varepsilon_i$ (with $\varepsilon_0$ for $K$), so that it is suffices to prove that an added coefficient of $\rme^{\rmi kt}$ is (in absolute value) less than $\varepsilon_i/2^{|k|}$. By rescaling the knots, we can assume that all these $\varepsilon$-values are equal to 1. 

The construction of $f$ in the previous section is done in such a way that the $k$th coefficients differ from the original parametrisations of $K$ by less than $1/2^{|k|+1}$, see Remark~\ref{rem2}. Thus we have to show that the coefficient of $\rme^{\rmi kt}$ that we add now has an absolute value less than $1/2^{|k|+1}$. Then the triangle inequality implies that we have not changed the knot $K$. Likewise, the coefficients of $f^{(n_i)}$ differ from those in $(N_i-1)^{n_i}f_i$, the parametrisation of $L_i$, by at most $(N_i-1)^{n_i}/2^{|k|+1}$. In order to preserve the knot $L_i$ we thus need that
\begin{equation}\label{eq:ineqa1jk}
|a_{1,j,k}||k|^{n_i-1}<(N_i-1)^{n_i}/2^{|k|+1}
\end{equation}
for all $j=1,2,3$, $k\in\mathbb{Z}$. 

By Lemma~\ref{curvature} and Lemma~\ref{lem:zcoord} we can choose the absolute values of the coefficients $a_{1,j,k}$ arbitrarily small without changing the degree $N$ of the trigonometric polynomials, so that $|a_{1,j,k}|=0$ for all $k$ with $|k|>N$. Since the sequence $(N_i-1)^{n_i}|k|^{1-n_i}$ goes to infinity as $i$ goes to infinity (keeping $k$ fixed), knowing that $N_i\geq N_1>1$, the sequence has a positive global minimum. Thus by choosing the coefficients $a_{1,j,k}$ sufficiently small, we can guarantee that Eq.~\eqref{eq:ineqa1jk} is satisfied for all $j,k$ and all $i$ simultaneously.

In fact, we may choose $|a_{1,j,k}||k|^{n_i-1}<(N_i-1)^{n_i}/2^{|k|+2}$ for all $i$ and $|a_{1,j,k}/k|<1/2^{|k|+2}$. Thus $f+\widetilde{p}_1$ parametrises $K$ and $(f+\widetilde{p}_1)^{(n_i)}$ parametrises $L_i$ for all $i$. Note that in this step we have used  that we can choose the coefficients of the trigonometric polynomials $p_{1,j}$, $j=1,2,3$, small without changing their degrees.

If $f^{(1)}(t)$ already parametrises a knot, we simply set $\widetilde{p}_1(t)=(0,0,0)$.

We write $\widetilde{\varepsilon}_1$ for the $\varepsilon$-value of the knot $K_1$, parametrised by $(f(t)+\widetilde{p}_1(t))'$.

We proceed inductively. Suppose that we have found a finite set of triples of trigonometric polynomials $\widetilde{p}_{j}(t):=(\widetilde{p}_{j,1}(t),\widetilde{p}_{j,2}(t),\widetilde{p}_{j,3}(t))$ for all $j=1,2,\ldots,n$ for some natural number $n$ such that $(f(t)+\sum_{j=1}^i\widetilde{p}_j(t))^{(i)}$ parametrises a knot $K_i$ for all $i=1,2,\ldots,n$. We write $\widetilde{\varepsilon}_i$ for the $\varepsilon$-value of $K_i$. Furthermore, we assume that the coefficients of $\rme^{\rmi kt}$ in $f(t)+\sum_{j=1}^n\widetilde{p}_j(t)$ differ from those in $f$ by less than
\begin{equation}
\min\left\{\sum_{j=1}^n\frac{1}{2^{|k|+j+1}},\min_{i\in\{1,2,\ldots,n-1\}}\sum_{j=i+1}^{n}\frac{\widetilde{\varepsilon}_i}{2^{|k|+j-i+1}}\right\}.
\end{equation}
This implies that $f(t)+\sum_{j=1}^n\widetilde{p}_j(t)$ parametrises $K$ and $(f(t)+\sum_{j=1}^n\widetilde{p}_j(t))^{(n_i)}$ parametrises $L_i$ for all $i$. Furthermore, it shows that $(f(t)+\sum_{j=1}^n\widetilde{p}_j(t))^{(i)}$ still parametrises the same knot $K_i$ as $(f(t)+\sum_{j=1}^i\widetilde{p}_j(t))^{(i)}$.

If $(f(t)+\sum_{j=1}^n\widetilde{p}_j(t))^{(n+1)}$ already parametrises a knot, we set $\widetilde{p}_{n+1}(t)=(0,0,0)$. Otherwise, we find by the same arguments as above a triple of trigonometric polynomials $p_{n+1}(t):=(p_{n+1,1}(t),p_{n+1,2}(t),p_{n+1,3}(t))$ such that $(f(t)+\sum_{j=1}^{n}\widetilde{p}_{j}(t))^{(n+1)}+p_{n+1}(t)$ parametrises a knot.

We write $p_{n+1,j}(t)$, $j=1,2,3$, as a finite Fourier series $\sum_{k=-\infty}^{\infty}a_{n+1,j,k}\rme^{\rmi kt}$ with all but finitely many coefficients equal to zero and $a_{n+1,j,0}=0$, and define
\begin{equation}
\widetilde{p}_{n+1,j}(t)=\sum_{k=-\infty}^{\infty}\frac{a_{n+1,j,k}}{(\rmi k)^{n+1}}\rme^{\rmi kt}
\end{equation}
as well as $\widetilde{p}_{n+1}(t):=(\widetilde{p}_{n+1,1}(t),\widetilde{p}_{n+1,2}(t),\widetilde{p}_{n+1,3}(t))$. Then $(f(t)+\sum_{j=1}^{n+1}\widetilde{p}_{j}(t))^{(n+1)}$ parametrises the same knot $K_{n+1}$ as $(f(t)+\sum_{j=1}^{n}\widetilde{p}_{j}(t))^{(n+1)}+p_{n+1}(t)$.

Furthermore, we may choose the absolute values of the coefficients of $\rme^{\rmi kt}$ in $\widetilde{p}_{n+1}(t)$, so that
\begin{align}
|a_{n+1,j,k}||k|^{-n-1}&<2^{-|k|-n-2},&\\
|a_{n+1,j,k}||k|^{n_i-n-1}&<(N_i-1)^{n_i}2^{-|k|-n-2}&\text{ for all }i,\label{eq:2ineq}\\
|a_{n+1,j,k}||k|^{i-n-1}&<\widetilde{\varepsilon}_i2^{-|k|-n-2+i} &\text{ for all }i\in\{1,2,\ldots,n\}.
\end{align}
The fact that we can satisfy Eq.~\eqref{eq:2ineq} for all $i$ simultaneously is proved by the same arguments as in the initial step above for $\widetilde{p}_1$. The inequalities imply that $(f(t)+\sum_{j=1}^{n+1}\widetilde{p}_{j}(t))$ parametrises $K$, $(f(t)+\sum_{j=1}^{n+1}\widetilde{p}_{j}(t))^{(n_i)}$ parametrises $L_i$ for all $i$, and $(f(t)+\sum_{j=1}^{n+1}\widetilde{p}_{j}(t))^{(i)}$ parametrises a knot $K_i$ for all $i=1,2,\ldots,n$.

Thus we have an inductive definition of trigonometric polynomials $(\widetilde{p}_j)_{j\in\mathbb{N}}$ such that the coefficients of $\rme^{\rmi kt}$ in $(f(t)+\sum_{j=1}^{\infty}\widetilde{p}_j(t))^{(n_i)}$ differ from the corresponding coefficients in $(N_i-1)^{n_i}f_i(t)$, which was the original parametrisation of $L_i$, by less than
\begin{equation}
(N_i-1)^{n_i}\sum_{j=0}^{\infty}1/2^{|k|+j+1}=(N_i-1)^{n_i}1/2^{|k|},
\end{equation}
where the $j=0$-term comes from the construction of $f$ in the previous section, while the terms with $j>0$ come from $\widetilde{p}_j$. In particular, by Lemma~\ref{lem:epsilon} $(f(t)+\sum_{j=1}^{\infty}\widetilde{p}_j(t))^{(n_i)}$ is a parametrisation of $L_i$. Likewise, setting $n_0=0$, we get that $f(t)+\sum_{j=1}^{\infty}\widetilde{p}_j(t)$ is a parametrisation of $K$. For all $n$ with $n\neq n_i$ for all $i\in\mathbb{N}$ we have that the coefficients of $\rme^{\rmi kt}$ in $(f(t)+\sum_{j=1}^{\infty}\widetilde{p}_j(t))^{(n)}$ differ from the corresponding coefficients in $(f(t)+\sum_{j=1}^{n}\widetilde{p}_j(t))^{(n)}$ by less than
\begin{equation}
\widetilde{\varepsilon}_n\sum_{j=n+1}^{\infty}1/2^{|k|+j}<\widetilde{\varepsilon}_n/2^{|k|}.
\end{equation}
So in particular, again by Lemma~\ref{lem:epsilon}, $(f(t)+\sum_{j=1}^{\infty}\widetilde{p}_j(t))^{(n)}$ parametrises a knot, namely $K_n$, the same knot that is parametrised by  $(f(t)+\sum_{j=1}^{n}\widetilde{p}_j(t))^{(n)}$. Since this is true for all $n\in\mathbb{N}$, this finishes the proof.
\end{proof}

This concludes the proof of Theorem~\ref{thm:ultra}. As explained in the introduction an immediate consequence of this result is that every knot is an ultraknot.

Having shown that we can prescribe subsequences $K_{n_i}$ of a  loop sequence of a parametrisation $f$ of any given knot $K$, it is a natural question whether we can realise any sequence of knots $K_n$ as the loop sequence of a parametrisation $f$ of any given knot $K$. This is still an open problem. Note that the methods from the previous section are not well-suited for this problem, since the argument relies on the fact that we can make the gaps between the different $n_i$s arbitrarily large.

\section{Limit knots}\label{sec:4}

In this section we study limit knots, the knot types that can arise as limits of sequences of knots $K_n$, parametrised by $f^{(n)}$ for some smooth function $f:S^1\to\mathbb{R}^3$. In general, the question which knots arise as limit knots remains a difficult problem. However, if we assume that the coordinate functions of $f$ are trigonometric polynomials, so that $f$ is the parametrisation of a Fourier knot, we obtain several results.

We start our discussion of limit knots with the example of the unknot. It can be parametrised as the planar unit circle $f(t)=(\cos(t),\sin(t),0)$. Then the sequence of parametric curves $K_n$, parametrised by $f^{(n)}(t)$, is periodic with period 4, i.e., $K_n=K_{n+4}$ for all $n\in\mathbb{N}$. We have $f^{(4k)}(t)=f(t)$, as well as
\begin{align}
f^{(4k+1)}(t)&=(-\sin(t),\cos(t),0),\nonumber\\
f^{(4k+2)}(t)&=(-\cos(t),-\sin(t),0)=-f(t),\nonumber\\
f^{(4k+3)}(t)&=(\sin(t),-\cos(t),0)=-f^{(4k+1)}(t).
\end{align}
Note that all of these loops are again the unit circle in the plane. The only thing that changes is the $t=0$-point on the curve, which is rotated by $\pi/2$ with each derivative. Thus the unknot is a limit knot.

It follows from Theorem~\ref{thm:ultra} that every knot is the limit of a convergent subsequence  of $K_n$, parametrised by $f^{(n)}$ for some initial knot parametrisation $f$, since we can pick $K_{n_i}=L_i=K$ for all $i$. However, since we are only dealing with sequence of knots that are realised by loop sequences of smooth maps $f:S^1\to\mathbb{R}^3$ and these subsequences are in general not of this form, this does not imply that every knot is a limit knot.

\begin{lemma}\label{lem:liss}
Let $f:S^1\to\mathbb{R}^3$, $f(t)=(x(t),y(t),z(t))$, where each coordinate function is a real trigonometric polynomial whose highest order terms are given by 
\begin{align}
&A_{x,N_x}\cos(N_xt),\nonumber\\
&A_{y,N_y}\cos(N_yt+\varphi_{y,N_y},\\
&A_{z,N_z}\cos(N_zt+\varphi_{z,N_z}),\nonumber
\end{align}
respectively, with $A_{x,N_x}, A_{y,N_y},A_{z,N_z}\in\mathbb{R}\backslash\{0\}$, $n_x,n_y,n_z\in\mathbb{N}$ and $\varphi_y,\varphi_z\in[0,2\pi)$. Assume that the frequencies $n_x$, $n_y$ and $n_z$ are pairwise coprime. Furthermore, assume that $\varphi_y/\pi,\varphi_z/\pi,(\varphi_y-\varphi_z)/\pi\notin\mathbb{Q}$. Then there is an $M>0$ such that for all $m>M$ the knot $K_m$ is a Lissajous knot. 
\end{lemma}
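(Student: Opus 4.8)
The plan is to peel off, up to ambient isotopy, the highest-order terms of $f$ and to show that $K_m$ converges in the $C^1$-topology to a Lissajous curve --- one of only finitely many --- as $m\to\infty$; openness of knot types then forces $K_m$ to be ambient isotopic to that curve for all large $m$.

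First I would differentiate and rescale. Writing each coordinate function as $\sum_k(a_k\cos kt+b_k\sin kt)$ and using that the $m$th derivative of $\cos kt$ equals $k^m\cos(kt+m\pi/2)$, taking $m$ derivatives multiplies the $k$th harmonic by $k^m$ and shifts its phase by $m\pi/2$. Since scaling each coordinate axis of $\mathbb R^3$ by a positive constant is an ambient isotopy, $K_m$ is ambient isotopic to the curve parametrised by $\widetilde f_m$, obtained from $f^{(m)}$ by dividing the $x$-, $y$- and $z$-coordinate by $|A_{x,N_x}|N_x^m$, $|A_{y,N_y}|N_y^m$ and $|A_{z,N_z}|N_z^m$ respectively. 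In $\widetilde f_m$ the leading ($k=N_x$, $k=N_y$, $k=N_z$) harmonics become $\sign(A_{x,N_x})\cos(N_xt+m\pi/2)$, $\sign(A_{y,N_y})\cos(N_yt+\varphi_y+m\pi/2)$ and $\sign(A_{z,N_z})\cos(N_zt+\varphi_z+m\pi/2)$, while every lower-order harmonic carries a factor $(k/N_\bullet)^m$ with $k<N_\bullet$. Setting
\[
g_m(t):=\bigl(\sign(A_{x,N_x})\cos(N_xt+\tfrac{m\pi}2),\ \sign(A_{y,N_y})\cos(N_yt+\varphi_y+\tfrac{m\pi}2),\ \sign(A_{z,N_z})\cos(N_zt+\varphi_z+\tfrac{m\pi}2)\bigr),
\]
and absorbing the signs and the multiple of $\pi/2$ into the phases, $g_m$ is a Lissajous curve, and it depends on $m$ only through $m\bmod 4$; so $\{g_m\}$ is a set of at most four curves. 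Since $|(k/N_\bullet)^m|\le((N_\bullet-1)/N_\bullet)^m\to 0$ and there are only finitely many lower-order harmonics with fixed coefficients --- and the same estimate controls the first derivative, the extra factor $k\le N_\bullet$ being harmless --- one gets $|\widetilde f_m-g_m|_1\to 0$ in the $C^1$-norm of Lemma~\ref{lem:epsilon}.

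The core of the argument is to show that each of the (at most four) limit curves $g_m$ is a genuine knot, i.e.\ a regular embedded closed curve, and hence a Lissajous knot. A self-intersection $g_m(t)=g_m(s)$ with $t\ne s$ means that in each coordinate the two cosine arguments either differ by an element of $2\pi\mathbb Z$ or sum to one, giving a finite case distinction over the eight sign patterns. Pairwise coprimality of $N_x$, $N_y$, $N_z$ immediately rules out every pattern with at least two ``difference'' coordinates, since those force $t\equiv s$. In each remaining pattern the equations collapse to a single resonance relation asserting that a fixed $\mathbb Z$-linear combination of $\varphi_y/\pi$ and $\varphi_z/\pi$ --- plus a rational contribution from the $m\pi/2$-shift and the leading signs --- lies in $\mathbb Z$; the hypotheses that $\varphi_y/\pi$, $\varphi_z/\pi$ and $(\varphi_y-\varphi_z)/\pi$ are irrational are exactly what is needed to exclude all of these for all four residue classes at once. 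An analogous but simpler computation rules out $g_m'(t)=0$, so $g_m$ is indeed a regular embedded closed curve; this is, in essence, the criterion for a Lissajous curve to be knotted from \cite{272}.

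Finally, by openness of knot types in the $C^1$-topology (the fact underlying Lemma~\ref{lem:epsilon}) together with the previous paragraph, there is a single $\varepsilon_0>0$ such that any curve within $C^1$-distance $\varepsilon_0$ of any $g_m$ parametrises the same knot type as that $g_m$; in particular such a curve is a knot. Choosing $M$ large enough that $|\widetilde f_m-g_m|_1<\varepsilon_0$ for all $m>M$, we conclude that for every $m>M$ the loop $K_m$ is ambient isotopic to $g_m$, a Lissajous knot. I expect the phase/resonance analysis of the limit curves to be the main obstacle: one has to enumerate the sign patterns carefully, keep track of the extra $m\pi/2$ phase shift (which replaces the single pair $(\varphi_y,\varphi_z)$ by its four $\tfrac{\pi}{2}$-translates), and verify that the three stated irrationality assumptions block precisely the resonances that coprimality does not already kill --- everything else reduces to the domination estimates above and an appeal to openness.
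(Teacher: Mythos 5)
Your proposal is correct and follows essentially the same route as the paper: differentiate, rescale each axis by $N_\bullet^{-m}$ so the leading harmonics become a unit-amplitude Lissajous curve (depending only on $m \bmod 4$) while all lower-order harmonics decay like $(k/N_\bullet)^m$ in $C^1$, observe that the coprimality and irrationality hypotheses make each of the four limit curves an embedded Lissajous knot, and conclude by openness of knot types via Lemma~\ref{lem:epsilon}. The only difference is that you sketch the self-intersection/resonance analysis of the limit curves by hand, whereas the paper simply invokes the classification of singular Lissajous curves from \cite{272}.
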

\begin{proof}
Every real trigonometric polynomial $p$ can be written in the form $p(t)=\sum_{k=0}^NA_k\cos(kt+\varphi_k)$, where $N\in\mathbb{N}\cup\{0\}$ is the degree of the trigonometric polynomial, $A_k\in\mathbb{R}$ and $\varphi_k\in[0,2\pi)$ for all $k$. If $p$ is non-constant, then the degree of the $n$th derivative of $p$ is again $N$ for all $n\in\mathbb{N}$. The coefficient of $p^{(n)}$ corresponding to the frequency $k$ has absolute value $k^n|c_k|$, so that eventually (for sufficiently large $n$) the coefficient corresponding to $k=N$ is much larger than all other coefficients.

If the highest order terms of $f$ are as stated above, then the highest order terms of $f^{(4\ell+i)}$, $i=0,1,2,3$, are
\begin{align}
&X_{4\ell+i}(t)=N_x^{4\ell+i}A_{x,N_x}\cos(N_xt+\varphi_{x,N_x}+i\pi/2),\nonumber\\ 
&Y_{4\ell+i}(t)=N_y^{4\ell+i}A_{y,N_y}\cos(N_yt+\varphi_{y,N_y}+i\pi/2),\\
&Z_{4\ell+i}(t)=N_z^{4\ell+i}A_{z,N_z}\cos(N_zt+\varphi_{z,N_z}+i\pi/2).\nonumber
\end{align}

The article \cite{272} completely characterises the Lissajous curves with self-intersections. The assumption on the highest order terms implies that
\begin{equation}
(N_x^{-(4\ell+i)}X_{4\ell+i}(t),N_y^{-(4\ell+i)}Y_{4\ell+i}(t),N_z^{-(4\ell+i)}Z_{4\ell+i}(t))
\end{equation} 
parametrises a knot (as opposed to a non-simple loop) $K'_{i}$, which by definition is a Lissajous knot. We claim that for large enough $\ell$ the knot $K_{4\ell+i}$, which is parametrised by $f^{(4\ell+i)}(t)$, is equivalent to $K'_{i}$.

By Lemma~\ref{lem:epsilon} there is an $\varepsilon>0$ such that adding Fourier series $\sum_{k\in\mathbb{Z}}a_{x,k}\rme^{\rmi kt}$ to the coordinate functions does not change the knot type of $K'_i$ as long as the absolute values of the coefficients $a_{x,k}$ are bounded from above by $\varepsilon/2^{|k|}$ (and similarly for $y$ and $z$).

We obtain a new parametrisation of $K_{4\ell+i}$ by multiplying the $x$-coordinate by $N_x^{-(4\ell+i)}$, the $y$-coordinate by $N_y^{-(4\ell+i)}$ and the $z$-coordinate by $N_z^{-(4\ell+i)}$. Thus $K_{4\ell+i}$ has a trigonometric parametrisation, where the highest order terms are precisely $(N_x^{-(4\ell+i)}X_{4\ell+i}(t),N_y^{-(4\ell+i)}Y_{4\ell+i}(t),N_z^{-(4\ell+i)}Z_{4\ell+i}(t))$. Writing $x(t)=\sum_{k=0}^{N_x}A_{x,k}\cos(kt+\varphi_{k})$, the lower order terms  of this new parametrisation are of the form $\left(\tfrac{k}{N_x}\right)^{4\ell+i}A_{x,k}$ (and similarly for $y$ and $z$). If $\ell$ is sufficiently large, this has a smaller absolute value than $\varepsilon/2^{|k|}$, because $N_x>k$. Since there are only finitely many non-zero $A_{x,k}$s, such a value of $\ell$ exists for all $k$ with $|k|\leq \max\{N_x,N_y,N_z\}$ at the same time. Thus by Lemma~\ref{lem:epsilon} the knot $K_{4\ell+i}$ is a sufficiently close approximation of $K'_{i}$ to be ambient isotopic.
\end{proof}

Lemma~\ref{lem:liss} proves the first part of Theorem~\ref{thm:limit}. Since after some point all knots in the sequence are Lissajous knots, the limit knot (if it exists) must also be a Lissajous knot. The assumption that the maximal degrees $N_x$, $N_y$ and $N_z$ are pairwise coprime is necessary to guarantee that $K'_i$ is actually a knot and not simply a loop with intersections or a loop that traces back on itself. The conditions on the phase shifts can be somewhat relaxed, see \cite{272} for a complete description of the values of $\varphi_y$ and $\varphi_z$ that result in self-intersections.

At the moment it is not clear, which knots can be obtained as limit knots from triples of trigonometric polynomials whose maximal degrees are not pairwise coprime or whose phase shifts are not as in the lemma. It is conceivable that $K'_i$ is a singular knot, whose singular crossings are resolved by the lower order terms and in principle one could obtain different knot types for different derivatives. 

Lissajous knots have specific symmetries, see \cite{272}. If all frequencies of a Lissajous knot $K$ are odd, then $K$ must be strongly plus amphicheiral and if one of the frequencies is even, then $K$ must be 2-periodic. Since the frequencies are pairwise coprime, these are the only two possible cases. Furthermore, the Arf invariant of a Lissajous knot must be zero. This implies that there are knots, such as the trefoil knot or the figure-eight knot, that are not Lissajous knots and hence are not Fourier limit knots. However, there are infinitely many Lissajous knot types \cite{infty} and it was proved that every knot can be parametrised by a trigonometric function that has only one term in its $x$-coordinate, one terms in its $y$-coordinate and two terms in its $z$-coordinate \cite{soret}.


\begin{lemma}\label{lem:limit}
Let $K$ be a Lissajous knot such that the frequencies $n_x$, $n_y$ and $n_z$ in its parametrisation are all odd. Then $K$ is a Fourier limit knot.
\end{lemma}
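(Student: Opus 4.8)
The plan is to realise $K$ as the limit knot of (a small generic perturbation of) its own Lissajous parametrisation, in fact so that the whole loop sequence is constantly $K$. Writing the given parametrisation, after a time-shift normalising the $x$-phase to $0$, as $g(t)=(\cos(n_xt),\cos(n_yt+\varphi_y),\cos(n_zt+\varphi_z))$ with $n_x,n_y,n_z$ odd (the hypothesis) and pairwise coprime (as in the definition of a Lissajous knot, \cite{272}), I first move the phases. By \cite{272} the set of phase pairs $(\varphi_y,\varphi_z)$ for which the curve fails to be embedded is a closed set with empty interior, and on its complement the knot type is locally constant; hence one may replace $(\varphi_y,\varphi_z)$ by some $(\psi_y,\psi_z)$ with $\psi_y/\pi$, $\psi_z/\pi$ and $(\psi_y-\psi_z)/\pi$ all irrational without leaving the same component, i.e.\ without changing the knot. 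The map $f(t):=(\cos(n_xt),\cos(n_yt+\psi_y),\cos(n_zt+\psi_z))$ then parametrises $K$ and satisfies all the hypotheses in the definition of a Fourier limit knot, so it remains only to control its loop sequence.

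Because $f$ is a pure Lissajous curve, $f^{(n)}(t)=\big(n_x^n\cos(n_xt+\tfrac{n\pi}{2}),\ n_y^n\cos(n_yt+\psi_y+\tfrac{n\pi}{2}),\ n_z^n\cos(n_zt+\psi_z+\tfrac{n\pi}{2})\big)$, and rescaling the three coordinates by the positive constants $n_x^{-n},n_y^{-n},n_z^{-n}$ (an ambient isotopy) identifies $K_n$ with the Lissajous curve $K'_i$ of phases $(\tfrac{i\pi}{2},\psi_y+\tfrac{i\pi}{2},\psi_z+\tfrac{i\pi}{2})$, where $i\in\{0,1,2,3\}$ and $i\equiv n\pmod 4$; in particular no approximation argument of the Lemma~\ref{lem:liss} type is needed here since $f$ carries no lower-order terms. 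Here $K'_0=f$, so $K'_0\cong K$ and $K'_0$ is embedded; as the next paragraph shows, $K'_1,K'_2,K'_3$ are images of $K'_0$ under a reparametrisation followed by a linear automorphism of $\mathbb{R}^3$, so all four are genuinely knots, and it suffices to prove $K'_1\cong K'_2\cong K'_3\cong K$.

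This is where the oddness of the frequencies enters. Since every $n_j$ is odd, $\cos(n_j(t+\pi)+\theta)=-\cos(n_jt+\theta)$; applying this in each coordinate, the parametrisation of $K'_2$ — which differs from that of $K'_0$ by the addition of $\pi$ to each phase — equals both $-K'_0(t)$ and $K'_0(t+\pi)$. The second equality gives $K'_2=K'_0$ as subsets of $\mathbb{R}^3$; combined with the first, it shows that the image of $K'_0$ coincides with its point-reflection through the origin, so $K'_0$ is amphichiral. The same reasoning gives $K'_3=K'_1$, so only $K'_1$ against $K'_0$ remains. Here one uses that $\cos(n_j(t+\tfrac{\pi}{2})+\theta)$ equals $-\sin(n_jt+\theta)$ if $n_j\equiv 1\pmod 4$ and $+\sin(n_jt+\theta)$ if $n_j\equiv 3\pmod 4$, whereas the $j$-th coordinate of $K'_1(t)$ is $-\sin(n_jt+\theta_j)$ for every $j$ (where $(\theta_x,\theta_y,\theta_z)=(0,\psi_y,\psi_z)$); comparing coordinates yields $K'_1(t)=\rho\big(K'_0(t+\tfrac{\pi}{2})\big)$, where $\rho$ is the diagonal linear map of $\mathbb{R}^3$ negating exactly the coordinates $j$ with $n_j\equiv 3\pmod 4$. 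If $\det\rho=1$ then $\rho\in SO(3)$ and $K'_1\cong K'_0$; if $\det\rho=-1$ then $K'_1$ is the mirror image of $K'_0$, again $\cong K'_0$ since $K'_0$ is amphichiral. In all cases $K'_1\cong K'_0\cong K$, so $K_n\cong K$ for every $n$; the loop sequence of $f$ converges to $K$ and $K$ is a Fourier limit knot.

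I expect the decisive point to be the third paragraph: one must see that the four a priori distinct Lissajous knots making up the period-$4$ loop sequence of a pure Lissajous curve collapse to a single knot type precisely because the frequencies are odd — via the reparametrisation $t\mapsto t+\pi$, which at once identifies $K'_{i+2}$ with $K'_i$ and forces amphichirality, together with the reparametrisation $t\mapsto t+\tfrac{\pi}{2}$, which exhibits $K'_1$ as a coordinate-reflection of a reparametrisation of $K'_0$, so that any orientation reversal is absorbed by amphichirality. The only other point requiring genuine care is the opening perturbation, where \cite{272} is used to push the phases to irrational values while staying in the chamber of phase space on which the curve is embedded and of type $K$; since that chamber is open, this is routine.
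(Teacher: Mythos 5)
Your proposal is correct and follows essentially the same route as the paper: both reduce the period-$4$ loop sequence of the pure Lissajous parametrisation to the single comparison $K'_1$ versus $K'_0$ via the reparametrisation $t\mapsto t+\pi/2$ composed with negation of the coordinates with $n_j\equiv 3\pmod 4$ (the paper's $\delta_j\in\{0,\pi\}$ is exactly your $\rho$), and both absorb a possible orientation-reversal using the amphichirality of all-odd-frequency Lissajous knots, with the irrational-phase perturbation handled by the openness of the embedded chamber. The only cosmetic differences are that you perturb the phases at the start rather than the end, and you derive amphichirality directly from $K'_0(t+\pi)=-K'_0(t)$ instead of citing the symmetry result of the Lissajous-knot literature.
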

\begin{proof}
We write $K_0=K$ and write $K_i$, $i=1,2,3$, for the knots that are parametrised by $f^{(i)}$, the $i$th derivative of the trigonometric polynomial defining $K$ as a Lissajous knot. For a more compact notation we write $x_1$ for $x$, $x_2$ for $y$ and $x_3$ for $z$. Likewise, the frequencies and phase shifts are (for example) denoted by $n_1$ and $\varphi_1$ instead of $n_x$ and $\varphi_x$, respectively. In particular, $K_0$ is given by 
\begin{align}\label{eq:lisspara}
x_1(t)&=\cos(n_1t+\varphi_1),\nonumber\\
x_2(t)&=\cos(n_2t+\varphi_2),\nonumber\\
x_3(t)&=\cos(n_3t+\varphi_3).
\end{align}
Then $K_1$ is ambient isotopic to 
\begin{equation}
(\cos(n_1t+\varphi_1+\pi/2),\cos(n_2t+\varphi_2+\pi/2),\cos(n_3t+\varphi_3+\pi/2)).
\end{equation} 
In fact, every $K_i$ is ambient isotopic to a shift in all cosines by $\pi i/2$.

It is easy to see that
\begin{equation}
\cos(n_jt+\varphi_j+i\pi/2)=\cos(n_jt+\varphi_j+(4k_j+i)\pi/2)
\end{equation} 
for every $k_j\in\mathbb{Z}$ and all $i,j$. We may pick $k_j=\left\lfloor\tfrac{n_j}{4}\right\rfloor$, where $\lfloor\cdot\rfloor$ is the floor function that maps every real number $w$ to the largest integer that is less than or equal to $w$. It follows that $K_1$ is ambient isotopic to
\begin{equation}\label{eq:deltapara}
(\cos(n_1(t+\pi/2)+\varphi_1+\delta_1),\cos(n_2(t+\pi/2)+\varphi_2+\delta_2),\cos(n_3(t+\pi/2)+\varphi_3+\delta_3)),
\end{equation}
where $\delta_j=0$ if $n_j\equiv 1\text{ mod }4$ and $\delta_j=\pi$ if $n_j\equiv 3\text{ mod }4$. Therefore, every coordinate function in Eq.~\eqref{eq:deltapara} differs from the corresponding function in Eq.~\eqref{eq:lisspara} by a shift of $\pi/2$ in the variable $t$ and possibly an overall sign, depending on the residue class of $n_j$ modulo 4. Since Eq.~\eqref{eq:lisspara} parametrises $K$, this shows that $K_1$ is ambient isotopic to $K$ (if an even number of frequencies $n_j$ are 3 mod 4) or ambient isotopic to the mirror image of $K$ (if an odd number of frequencies $n_j$ are 3 mod 4). Since $K$ is a Lissajous knot with only odd frequencies, it is equivalent to its mirror image, so that in any case $K_1$ is ambient isotopic to $K$.
 
Note that $K_2$ is the mirror image of $K$ and therefore ambient isotopic to $K$. Likewise, $K_3$ is the mirror image of $K_1$ and therefore also ambient isotopic to $K$.

Since the only difference between $K_{i}$ and $K_{4k+i}$, $k\in\mathbb{N}$ is an overall linear factor that does not affect the knot type, we have $K_i\cong K$ for all $i$ and hence $K$ is a limit knot.

We may set $\varphi_1=0$ and vary $\varphi_2$ and $\varphi_3$ slightly without changing the knot type $K$. In particular, we can run through the same arguments as above for a Lissajous parametrisation of $K$, where $\varphi_2/\pi, \varphi_3/\pi,(\varphi_2-\varphi_3)/\pi\notin\mathbb{Q}$. It follows that $K$ is a Fourier limit knot.
\end{proof}
Together with Lemma~\ref{lem:liss} this completes the proof of Theorem~\ref{thm:limit}.

Note that the assumption that all frequencies are odd is necessary for the argument to work. Consider for example $f(t)=(x(t),y(t),z(t))$ with
\begin{align}
x(t)&=\cos(2t),\nonumber\\
y(t)&=\cos(3t+0.56099),\\
z(t)&=\cos(11t+2.58059),\nonumber
\end{align}
which parametrises the knot $5_2$ \cite{sample}. (Note that \cite{sample} uses the Hoste-Thistlethwaite-Weeks table \cite{hoste}, while we describe knots by their label in Rolfsen's table \cite{rolfsen}.) The derivative $f'(t)$ parametrises the unknot. The corresponding curves are shown in Figure~\ref{fig}. Therefore, this particular Lissajous parametrisation does not induce a constant loop sequence. Still, it might be possible to find a different (Lissajous) parametrisation that establishes $5_2$ as a (Fourier) limit knot.

\begin{figure}
\centering
\includegraphics[height=4cm]{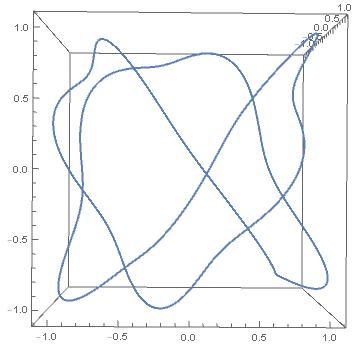}\qquad
\includegraphics[height=4cm]{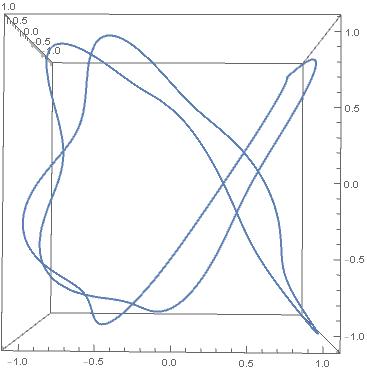}\\
\labellist
\Large
\pinlabel a) at 100 1850
\pinlabel b) at 1200 1850
\pinlabel c) at 100 850
\pinlabel d) at 1200 850
\endlabellist
\includegraphics[height=4cm]{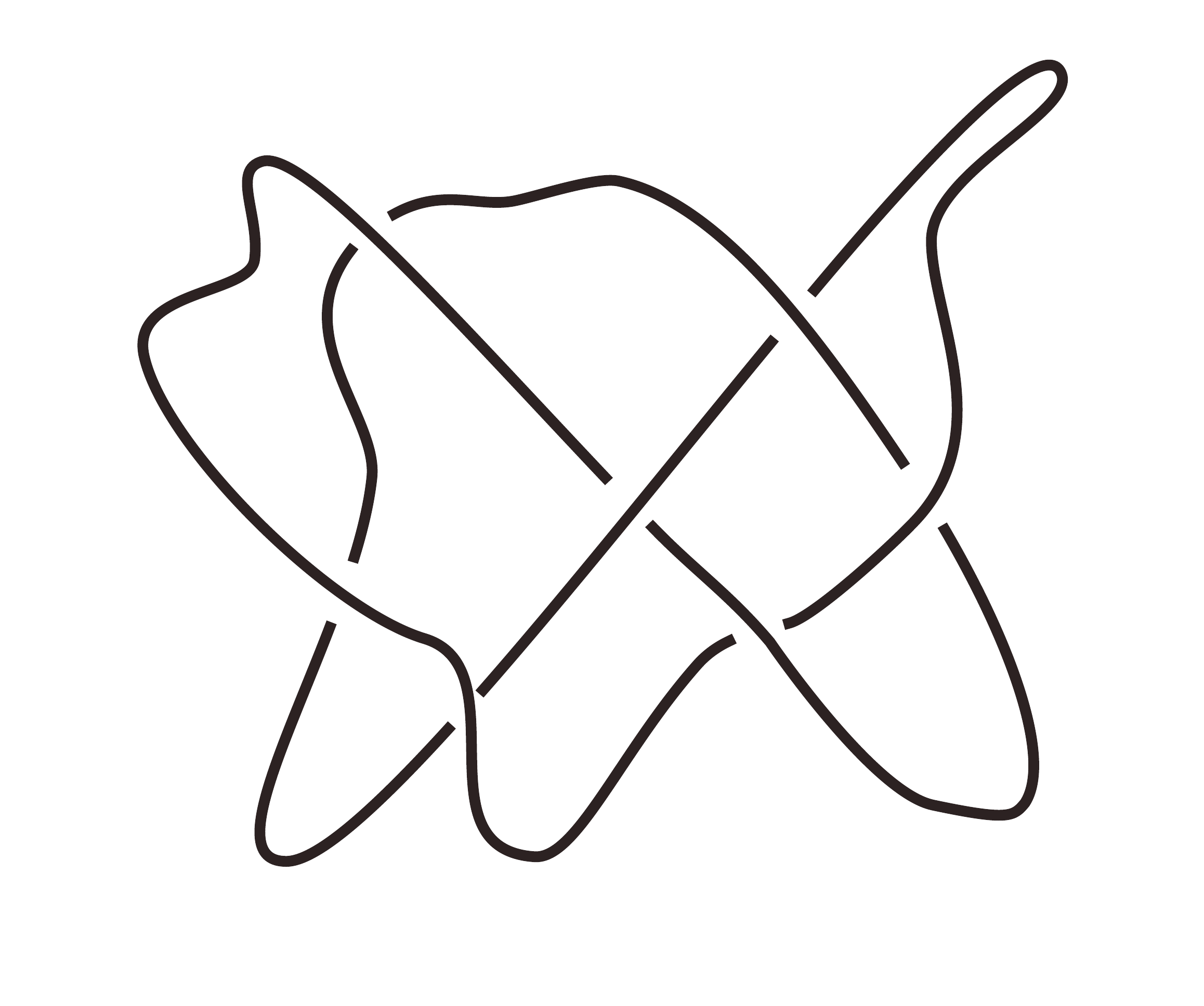}
\includegraphics[height=4cm]{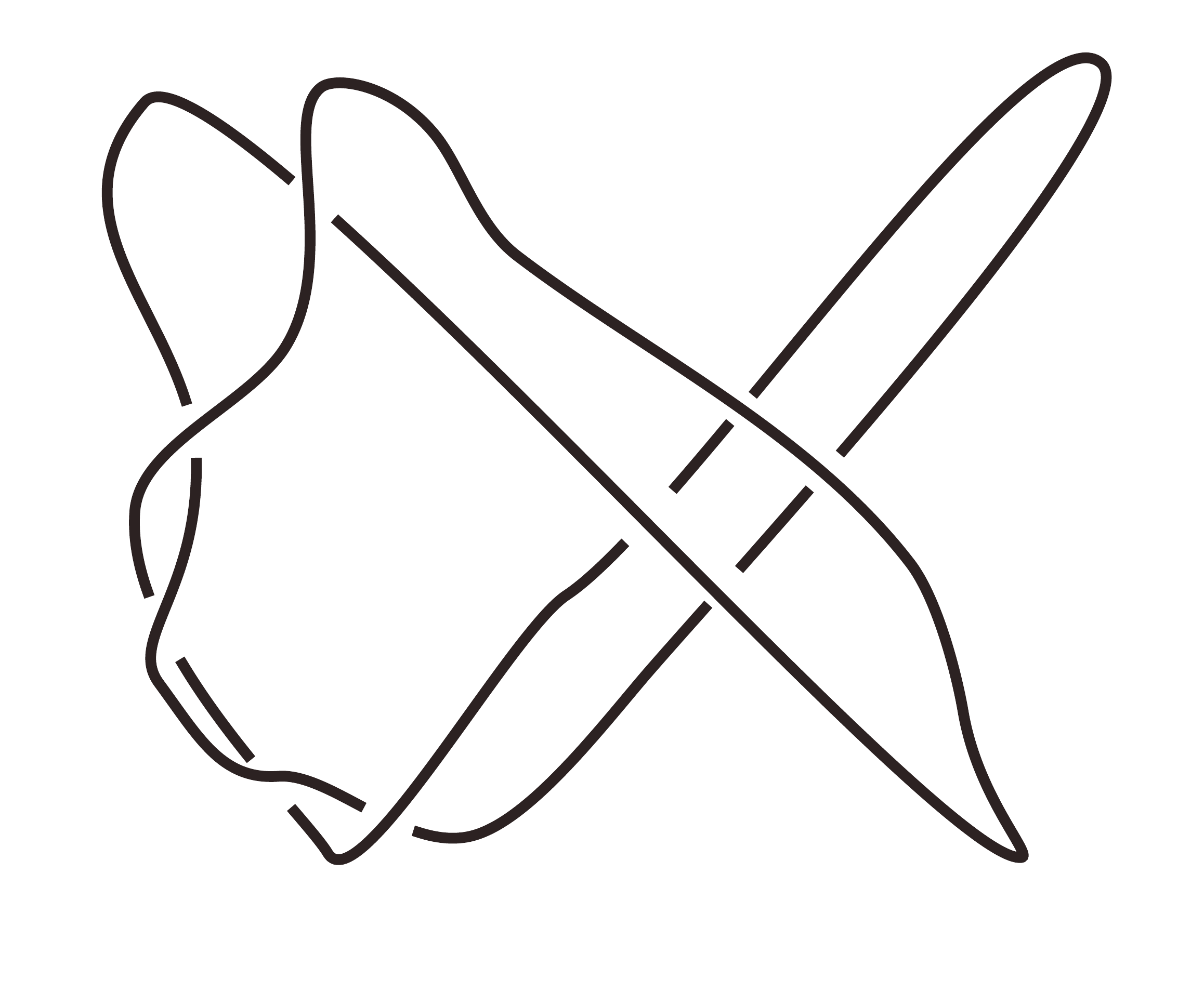}
\caption{a) The knot $5_2$ parametrised by $f(t)$. b) The derivative $f'(t)$ parametrises the unknot. c) The knot diagram for $5_2$. d) The knot diagram for the unknot. \label{fig}}
\end{figure}

By definition limit knots can be obtained as limits of constant sequences, that is, every derivative of the initial function yields the same knot. We now study the following question: Given a Lissajous Fourier limit knot $K$ what are the knots $K_0$ such that there is a parametrisation of $K_0$ whose corresponding sequence of knots $(K_n)_{n\in\mathbb{N}}$ converges to $K$?

\begin{proposition}\label{prop:start}
Let $K$ be a Lissajous knot with only odd frequencies $n_x$, $n_y$ and $n_z$. Let $B$ be braid on $s$ strands with $\ell$ crossings such that $s+\left\lfloor\tfrac{s\ell}{2}\right\rfloor<\min\{n_x,n_y\}$ and $(s+1) s\ell/2<n_z$ and such that the closure of $B$ is a knot $K_0$. Then there is a parametrisation $f:S^1\to\mathbb{R}^3$ of $K_0$ such that the sequence of knots $(K_n)_{n\in\mathbb{N}}$, parametrised by $f^{(n)}$, converges to $K$, i.e., there is an $N\in\mathbb{N}$ such that for all $n>N$ we have $K_n\cong K$. 
\end{proposition}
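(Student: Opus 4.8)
The plan is to combine the construction underlying Lemma~\ref{lem:param} with the Lissajous argument from Lemma~\ref{lem:limit}. Starting from the braid $B$ on $s$ strands with $\ell$ crossings, the algorithm of \cite{bode:algo}, as recalled in the proof of Lemma~\ref{lem:param}, produces trigonometric polynomials $F,G$ such that
\begin{align}
x(t)&=\cos(st)(R+F(t)),\nonumber\\
y(t)&=\sin(st)(R+F(t)),\\
z(t)&=G(t)\nonumber
\end{align}
parametrises $K_0$; moreover the degrees of $F$ and $G$ are controlled by the braid data, and one checks that they can be taken to be at most $\lfloor s\ell/2\rfloor$ for the $xy$-part and at most $s\ell/2$ for $G$ (this is the content of the numerical hypotheses $s+\lfloor s\ell/2\rfloor<\min\{n_x,n_y\}$ and $(s+1)s\ell/2<n_z$, which guarantee that the degree of each coordinate function of $f_0$ is strictly below the corresponding Lissajous frequency). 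First I would make this degree bookkeeping precise, so that $f_0=(x,y,z)$ is a trigonometric parametrisation of $K_0$ all of whose frequencies are strictly less than $n_x$, $n_y$, $n_z$ in the respective coordinates.

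Next I would add the Lissajous top-order terms: set
\begin{align}
\widehat{x}(t)&=x(t)+A_x\cos(n_xt),\nonumber\\
\widehat{y}(t)&=y(t)+A_y\cos(n_yt+\varphi_y),\\
\widehat{z}(t)&=z(t)+A_z\cos(n_zt+\varphi_z)\nonumber
\end{align}
where $(A_x\cos(n_xt),A_y\cos(n_yt+\varphi_y),A_z\cos(n_zt+\varphi_z))$ is a Lissajous parametrisation of $K$ with all frequencies odd, and where $A_x,A_y,A_z$ are chosen small enough that, by Lemma~\ref{lem:epsilon} (and Remark~\ref{remark}), $f:=(\widehat{x},\widehat{y},\widehat{z})$ still parametrises $K_0$. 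By the degree control from the previous paragraph, the highest-order term of $\widehat{x}$ is exactly $A_x\cos(n_xt)$, and similarly for $\widehat{y},\widehat{z}$, with the frequencies $n_x,n_y,n_z$ pairwise coprime (being distinct odd primes-free integers — more precisely we use that a Lissajous knot has pairwise coprime frequencies) and, after the harmless perturbation of $\varphi_y,\varphi_z$ allowed by Lemma~\ref{lem:limit}, with $\varphi_y/\pi,\varphi_z/\pi,(\varphi_y-\varphi_z)/\pi\notin\mathbb{Q}$. Thus $f$ satisfies the hypotheses of Lemma~\ref{lem:liss}, which gives an $M$ such that $K_m$ is a Lissajous knot for all $m>M$, and inspection of its proof identifies $K_{4\ell'+i}$ (for large $\ell'$) with the Lissajous knot $K'_i$ obtained from the $i$-fold derivative of the pure top-order Lissajous term. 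Finally, Lemma~\ref{lem:limit} (its proof, not just its statement — the argument that $K'_1\cong K'_2\cong K'_3\cong K$ using that the shift of each cosine by $\pi/2$ reproduces the same Lissajous knot up to mirror image, and that a Lissajous knot with odd frequencies is amphicheiral) shows that all four $K'_i$ are ambient isotopic to $K$. Hence $K_n\cong K$ for all $n>N:=M$, which is the assertion.

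The main obstacle I anticipate is the degree bookkeeping in the first step: I need to extract from \cite{bode:algo}/\cite{bode:ak} explicit (or at least explicit enough) upper bounds on the degrees of $F$ and $G$ in terms of $s$ and $\ell$, matching the quantities $s+\lfloor s\ell/2\rfloor$ and $(s+1)s\ell/2$ appearing in the hypothesis, and to verify that multiplying $\cos(st)$ by $(R+F(t))$ indeed raises the degree of the $x$- and $y$-coordinates only to $s+\deg F$. Once the top-order terms of $f$ are genuinely the Lissajous terms and all lower frequencies are strictly smaller, everything else is an immediate application of Lemmas~\ref{lem:epsilon}, \ref{lem:liss} and \ref{lem:limit}; the remaining subtlety is purely notational, namely tracking the period-$4$ behaviour of the derivatives and invoking the amphicheirality of $K$ to collapse the four subsequences $K_{4\ell'+i}$ to a single knot type.
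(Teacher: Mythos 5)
Your proposal is correct and follows essentially the same route as the paper: take the Fourier parametrisation of $K_0$ from the braid algorithm of \cite{bode:algo} (whose degrees are bounded by $s+\lfloor s\ell/2\rfloor$ and $(s+1)s\ell/2$, hence below the Lissajous frequencies by hypothesis), add a small multiple of the Lissajous parametrisation of $K$ so that Lemma~\ref{lem:epsilon} preserves the knot type $K_0$, and then invoke the proofs of Lemma~\ref{lem:liss} and Lemma~\ref{lem:limit} to conclude that $K_n\cong K$ for all large $n$. The only blemish is a minor slip where you state the degree bound for $G$ as $s\ell/2$ rather than $(s+1)s\ell/2$, but you use the correct quantity where it matters.
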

\begin{proof}
It was shown in \cite{bode:algo} that a braid that is isotopic to $B$ can be parametrised as
\begin{equation}
\bigcup_{t\in[0,2\pi]}\bigcup_{j=1}^s\left(F\left(\frac{t+2\pi j}{s}\right),G\left(\frac{t+2\pi j}{s}\right),t\right)\subset\mathbb{R}^2\times[0,2\pi],
\end{equation}
where $F$ and $G$ are trigonometric polynomials of degree at most $\left\lfloor\tfrac{s\ell}{2}\right\rfloor$ and $(s+1)s\ell/2$, respectively. In \cite{bode:algo} the bounds are given erroneously as $\left\lfloor\tfrac{s\ell-1}{2}\right\rfloor$ and $\left\lfloor\tfrac{s+1}{2}\right\rfloor (s\ell-1)$, respectively. This mistake was pointed out in \cite{bode:ak}.

From this braid parametrisation we obtain a Fourier parametrisation of its closure $K_0$ via
\begin{align}
x(t)&=\cos(st)(R+F(t)),\nonumber\\
y(t)&=\sin(st)(R+F(t)),\nonumber\\
z(t)&=G(t),
\end{align}
where $R$ is some large positive real number. The maximum of the degrees of $x$ and $y$ is thus at most $s+\left\lfloor\tfrac{s\ell}{2}\right\rfloor$, while the degree of $z$ is at most $(s+1)s\ell/2$.

Let $(X(t),Y(t),Z(t))$ be the parametrisation of $K$ as a Lissajous knot. By Lemma~\ref{lem:epsilon} there is an $\varepsilon>0$ such that $(x(t)+\varepsilon X(t),y(t)+\varepsilon Y(t),z(t)+\varepsilon Z(t))$ is still a parametrisation of $K_0$. Since the frequencies $n_x$, $n_y$ and $n_z$ are strictly larger than the degrees of $x$, $y$ and $z$, respectively, it follows from the proof of Lemma~\ref{lem:liss} that for all sufficiently large $n$ the knot $K_{4n}$ is equivalent to $K$. Since all frequencies $n_x$, $n_y$ and $n_z$ are odd, the proof of Lemma~\ref{lem:limit} implies that $K_n\cong K$ for all sufficiently large $n$.
\end{proof}

\begin{corollary}\label{cor:unknot}
Let $K$ be a knot. Then there is a parametrisation $f:S^1\to\mathbb{R}^3$ of $K$ such that the resulting sequence of knots $(K_n)_{n\in\mathbb{N}}$ converges to the unknot.
\end{corollary}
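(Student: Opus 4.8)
\emph{Proof proposal for Corollary~\ref{cor:unknot}.}

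The plan is to derive the corollary from Proposition~\ref{prop:start} with the unknot playing the role of the limit knot. By Alexander's theorem the given knot $K$ is the closure of some braid $B$ on $s$ strands with $\ell$ crossings; set $N_1=s+\lfloor s\ell/2\rfloor$ and $N_2=(s+1)s\ell/2$. Proposition~\ref{prop:start} then delivers the desired parametrisation of $K$ as soon as the unknot has a Lissajous parametrisation whose frequencies $n_x,n_y,n_z$ are odd and pairwise coprime with $n_x,n_y>N_1$ and $n_z>N_2$, and whose phases satisfy $\varphi_y/\pi,\varphi_z/\pi,(\varphi_y-\varphi_z)/\pi\notin\mathbb{Q}$. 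Thus the whole task reduces to the claim that the unknot admits Lissajous parametrisations with arbitrarily large odd frequencies.

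To prove this I would fix three distinct odd primes $p<q<r$ with $p>N_1$ and $r>N_2$ and analyse the Lissajous curve $\gamma(t)=(\cos(pt),\cos(qt+\varphi_1),\cos(rt+\varphi_2))$ for small $\varphi_1,\varphi_2>0$. The key point is that at $\varphi_1=\varphi_2=0$ one has $\gamma_0(t)=\gamma_0(2\pi-t)$, so $\gamma_0$ doubly covers the arc $X$ parametrised by $\alpha(s)=(\cos(ps),\cos(qs),\cos(rs))$, $s\in[0,\pi]$. Any coincidence $\alpha(s)=\alpha(s')$ forces, coordinate by coordinate, $s-s'$ or $s+s'$ to lie in $\tfrac{2\pi}{p}\mathbb{Z}$, in $\tfrac{2\pi}{q}\mathbb{Z}$, and in $\tfrac{2\pi}{r}\mathbb{Z}$; pairwise coprimality together with $s,s'\in[0,\pi]$ leaves only $s=s'$, so $\alpha$ is injective. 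Since $\cos$ is even, $\alpha$ has a convergent expansion in $s^2$ near $s=0$ (and in $(s-\pi)^2$ near $s=\pi$) with nonvanishing linear coefficient $-\tfrac12(p^2,q^2,r^2)$, so $X$ is in fact a smoothly embedded arc; let $N(X)\cong D^2\times[0,L]$ be a tubular neighbourhood and $h\colon N(X)\to[0,L]$ the projection onto the arc-length parameter of $X$.

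For $\varphi_1,\varphi_2$ small, $\gamma$ is $C^2$-close to $\gamma_0$ and hence has image inside $N(X)$. One checks that now $\gamma(t)=\gamma(2\pi-t)$ holds only when $\sin(qt)=\sin(rt)=0$, i.e.\ $t\in\pi\mathbb{Z}$; combined with $C^0$-closeness to $\gamma_0$ (whose only coincidences are $t\leftrightarrow 2\pi-t$) and with local injectivity near $\pi\mathbb{Z}$, where the first coordinate alone separates nearby points, this shows $\gamma$ is embedded. Moreover $h\circ\gamma_0$ is the folded arc-length function on $S^1$, which is Morse with exactly one minimum at $t=0$ and one maximum at $t=\pi$, the nondegeneracy at the folds coming from $\alpha''(0),\alpha''(\pi)\neq 0$; by $C^2$-stability $h\circ\gamma$ also has exactly one maximum and one minimum for $\varphi$ small. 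Therefore $\gamma$ meets each non-critical level of $h$ in two points and splits into two $h$-monotone arcs sharing their endpoints; the fibrewise straight-line homotopy between these two arcs is an embedded disc bounded by $\gamma$, so $\gamma$ is the unknot. A final generic choice of small $\varphi_1\neq\varphi_2$ arranges the three irrationality conditions, completing the claim and hence the corollary.

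I expect the step requiring the most care to be the claim that this particular $\gamma$ is the unknot: one must carry out the coprimality bookkeeping that makes $\alpha$ embedded and verify that the folded height function $h\circ\gamma_0$ is genuinely Morse with only two critical points. Note that the weaker observation that $\gamma$ lies inside a $3$-ball is by itself insufficient, which is exactly why the height-function analysis is needed. Everything downstream — invoking Proposition~\ref{prop:start} and checking that $p,q,r$ meet its frequency bounds — is routine.
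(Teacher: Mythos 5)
Your overall route is the same as the paper's: both reduce the corollary to Proposition~\ref{prop:start} applied with the unknot as the Lissajous limit, so that everything hinges on the unknot admitting Lissajous parametrisations with odd, pairwise coprime frequencies exceeding the bounds $s+\lfloor s\ell/2\rfloor$ and $(s+1)s\ell/2$. The difference is in how that input is obtained. The paper's proof is two lines: it simply cites \cite{272} for the fact that the unknot is a Lissajous knot for \emph{any} pairwise coprime triple of frequencies, and is done. You instead reprove this from scratch for triples of large odd primes, via a bridge-number-one argument: double cover of an embedded arc at zero phase, tubular neighbourhood, folded height function with exactly two nondegenerate critical points, fibrewise straight-line disc. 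That is a legitimate and self-contained alternative, and the Morse/disc part is sound. What it buys you is independence from the classification of singular Lissajous curves in \cite{272}; what it costs is length and one step that you have left genuinely underargued: embeddedness of $\gamma$ for small nonzero phases. Your mirror computation only excludes coincidences of the exact form $\gamma(t)=\gamma(2\pi-t)$, but the coincidence set of $\gamma_0$ is the whole antidiagonal, not a transverse crossing, so $C^0$-closeness plus ``local injectivity near $\pi\mathbb{Z}$'' does not by itself exclude coincidences $\gamma(t)=\gamma(t')$ with $(t,t')$ near the antidiagonal and $t'\neq 2\pi-t$. The clean fix is the full coincidence analysis: $\gamma(t)=\gamma(t')$ forces, for each frequency $n\in\{p,q,r\}$ with phase $\varphi$, either $n(t-t')\in 2\pi\mathbb{Z}$ or $n(t+t')+2\varphi\in 2\pi\mathbb{Z}$; two ``difference'' conditions force $t=t'$ by coprimality, and two ``sum'' conditions force a rational relation among $\varphi_1/\pi$, $\varphi_2/\pi$ and $(r\varphi_1-q\varphi_2)/\pi$, which your generic choice of phases excludes. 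This is precisely the content of \cite{272}, which is why the paper can dispose of the whole corollary by citation; once you patch that step your argument is complete.
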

\begin{proof}
By \cite{272} the unknot is a Lissajous knot for any triple of pairwise coprime frequencies $(n_x,n_y,n_z)$. The corollary then follows from Proposition~\ref{prop:start}.
\end{proof}



\section*{Acknowledgements}

The author is grateful to Peter Feller for fruitful discussions.
The author is supported by the European Union's Horizon 2020 research and innovation programme through the Marie Sklodowska-Curie grant agreement 101023017.


\begin{thebibliography}{99}



\bibitem{bode:algo} B. Bode and M. R. Dennis. \textit{Constructing a polynomial whose nodal set is any prescribed knot or link}. Journal of Knot Theory and its Ramifications \textbf{28}, no. 1 (2019), 1850082.

\bibitem{bode:ak} B. Bode. \textit{All links are semiholomorphic}. European Journal of Mathematics \textbf{9} (2023), article no. 85.

\bibitem{272} M. G. V. Bogle, J. E. Hearst, V. F. R. Jones and L. Stoilov. \textit{Lissajous knots}. Journal of Knot Theory and its Ramifications \textbf{3}, no. 2 (1994), 121--140.

\bibitem{sample} A. Boocher, J. Daigle, J. Hoste and W. Zheng. \textit{Sampling Lissajous and Fourier knots}. Experimental Mathematics \textbf{18}, no. 4 (2009), 481--497.

\bibitem{hoste} J. Hoste, M. Thistlethwaite and J. Weeks. \textit{The first 1,701,936 knots}. Math. Intelligencer \textbf{20}, no. 4 (1998), 33--48 

\bibitem{kauffman} L. H. Kauffman. \textit{Fourier knots}, In: \textit{Ideal knots}, Series on Knots and Everything 19, eds. A. Stasiak, V. Katritch and L. H. Kauffman (World Scientific, Singapore, 1998), 364--373.

\bibitem{infty} C. Lamm. \textit{There are infinitely many Lissajous knots}. Manuscripta Matematica \textbf{93} (1997), 29--37.

\bibitem{rolfsen} D. Rolfsen. \textit{Knots and links}. Houston: Publish or Perish Inc., 1990. 

\bibitem{soret} M. Soret and M. Ville. \textit{Lissajous and Fourier knots}. Journal of Knot Theory and its Ramifications \textbf{25}, no. 5 (2016), 1650026.



\end{thebibliography}
\end{document}